\documentclass[12pt,a4paper,reqno]{amsart}
\usepackage{amssymb,amsmath}
\usepackage{amsfonts,amsbsy,bm}
\usepackage[dvipsnames]{xcolor}
\usepackage{latexsym}
\usepackage{exscale}
\usepackage{enumerate}
\usepackage{soul}
\usepackage{amsthm}
\usepackage[colorlinks, bookmarks=true]{hyperref}
\usepackage{hyperref}

\usepackage{color}


%
%
%
%
\overfullrule=1cm
\headheight=8pt \topmargin=0pt \textheight=660pt \textwidth=436pt
\oddsidemargin=10pt \evensidemargin=10pt

\newcommand{\N}{\mathbb N}

\newcommand{\D}{\mathcal D}

\newcommand{\be}{{\mathbf e}}



\newcommand {\X} {{\mathbb X}}
\newcommand {\Y} {{\mathbb Y}}


\newcommand {\e} {{\varepsilon}}



\def\d{\displaystyle}

\newcommand{\bfe}{{\boldsymbol\e}}



\renewcommand{\phi}{{\varphi}}


\def\supp{\mathop{\rm supp}}
\def\sgn{\mathop{\rm sign}}

\numberwithin{equation}{section}
\newtheorem*{theorem*}{Theorem}
\newtheorem{theorem}{Theorem}[section]
\newtheorem{lemma}[theorem]{Lemma}

\newtheorem{corollary}[theorem]{Corollary}
\newtheorem{Remark}[theorem]{Remark}
\newtheorem{remark}[theorem]{Remark}

\newtheorem{proposition}[theorem]{Proposition}
\newtheorem{definition}[theorem]{Definition}
\newtheorem{example}[theorem]{Example}

\newcommand{\Ba}[1]{\begin{array}{#1}}
\newcommand{\Ea}{\end{array}}
\newcommand{\Be}{\begin{equation}}
\newcommand{\Ee}{\end{equation}}
\newcommand{\Bea}{\begin{eqnarray}}
\newcommand{\Eea}{\end{eqnarray}}
\newcommand{\Beas}{\begin{eqnarray*}}
\newcommand{\Eeas}{\end{eqnarray*}}
\newcommand{\Benu}{\begin{enumerate}}
\newcommand{\Eenu}{\end{enumerate}}
\newcommand{\Bi}{\begin{itemize}}
\newcommand{\Ei}{\end{itemize}}

\newcommand{\BR}{\begin{Remark} \em}
\newcommand{\ER}{\end{Remark}}
\newcommand{\BE}{\begin{example} \em}
\newcommand{\EE}{\end{example}}

\def\txcr{\textcolor{red}}

\def\co{\text{\rm co}}

\newcommand {\bone} {{\bf 1}}

\newcounter{reg}
\setcounter{reg}{0}

\newcounter{regTO}
\setcounter{regTO}{0}

\newcommand{\vertiii}[1]{{\left\vert\kern-0.25ex\left\vert\kern-0.25ex\left\vert #1
		\right\vert\kern-0.25ex\right\vert\kern-0.25ex\right\vert}}

\newcommand{\bff}{\mathbf 1}
\newcommand{\cupdot}{\mathbin{\mathaccent\cdot\cup}}
\renewcommand\Re{\operatorname{Re}}
\renewcommand\Im{\operatorname{Im}}

\begin{document}
\title[strong partially greedy bases and Lebesgue-type inequalities]{strong partially greedy bases and Lebesgue-type inequalities}

\author{M. Berasategui}

\address{Miguel Berasategui
	\\
 IMAS - UBA - CONICET - Pab I, Facultad de Ciencias Exactas y Naturales \\ Universidad de Buenos Aires \\ (1428), Buenos Aires, Argentina}
	 \email{mberasategui@dm.uba.ar}

\author{P.M. Bern\'a}

\address{Pablo M. Bern\'a
	\\
	Departmento de Matem\'aticas
	\\
	Universidad Aut\'onoma de Madrid
	\\
	28049 Madrid, Spain} \email{pablo.berna@uam.es}
	 
\author{S. Lassalle}
\address{Silvia Lassalle
\\Departamento de Matem\'atica \\
Universidad de San Andr\'es, Vito Duma 284\\
(1644) Victoria, Buenos Aires, Argentina and \\
IMAS - CONICET
} \email{slassalle@udesa.edu.ar}

\begin{abstract}
In this paper we continue the study of Lebesgue-type inequalities for greedy algorithms. We introduce  the notion of  strong partially greedy Markushevich bases and study the Lebesgue-type parameters associated with them. We prove that this property is equivalent to that of being conservative and quasi-greedy, extending a similar result given in \cite{DKKT} for Schauder bases. 
We also give a characterization of 1-strong partial greediness, following the study started in \cite{AW, AA1}.
\end{abstract}

\thanks{The first and third authors were supported in part by CONICET PIP 11220130100483 and ANPCyT PICT-2015-2299. The second author was partially supported by the grants MTM-2016-76566-P (MINECO, Spain) and 20906/PI/18 from Fundaci\'on S\'eneca (Regi\'on de Murcia, Spain), and has received funding from the European Union's Horizon 2020 research and innovation programme under the Marie Sk\l odowska-Curie grant agreement No. 777822. The third author was also supported by  PAI-UdeSA 2019. 
}

\date{\today}
\subjclass[2010]{41A65, 41A46, 41A17, 46B15, 46B45.}

\keywords{Non-linear approximation, Lebesgue-type inequality, greedy algorithm, quasi-greedy basis, partially greedy bases.}

\maketitle
\section{Introduction and background}

Throughout this paper  $\X$ is a separable infinite dimensional Banach space over the field  $\mathbb F=\mathbb R$ or $\mathbb C$, with a \textit{semi-normalized Markushevich basis}  $\mathcal B=(\be_n)_{n=1}^\infty$ (M-basis for short). That is, denoting with $\X^*$ the dual space of $\X$, $(\be_n)_{n=1}^\infty$ satisfies the following conditions:
\begin{enumerate}[\upshape (i)]
	\item $\X=\overline{[\be_n \colon n\in\N]}$,
	\item there is a (unique) sequence $(\be_n^*)_{n=1}^\infty\subset \X^*$, called  biorthogonal functionals, such that $\be_k^*(\be_n)=\delta_{k,n}$ for all $k,n\in\N$.
	\item  if $\be_n^*(x)=0$ for all $n$, then $x=0$.
	\item There exist a constant $c>0$ such that $\sup_n\lbrace\Vert \be_n\Vert, \Vert \be_n^*\Vert\rbrace\leq c$.
\end{enumerate}

Clearly, the semi-normalized condition of the basis is guaranteed by (ii) and (iv). Under the above  assumptions, every $x\in\X$ is associated with  a formal series $x \sim \sum_{n=1}^\infty \be_n^*(x) \be_n$, so that $\lim_n \be_n^*(x)=0$ and its coefficients $(\be_n^*(x))_{n=1}^\infty$ are uniquely determined. As usual, we denote by $\supp{(x)}$ the support of $x\in \X$, that is the set $\{n\in \mathbb{N}: e_n^*(x)\not=0\}$. For $A$ and $B$ subsets of $\mathbb N$, we write $A<B$ to mean that  $\max A<\min B$. If $m\in \mathbb N$, we write $m < A$ and $A < m$ for $\{ m\} < A$ and  
$A <\{ m\}$ respectively. Also, $A\cupdot B$ means the union of $A$ and $B$ with $A\cap B=\emptyset$. 

We recall a few standard notions about greedy algorithms.  For further information on the subject we refer the reader to the monograph by V. N. Temlyakov \cite{T0}  and to the articles \cite{DKKT, KT, T, Wo} and the reference therin. For Lebesgue-type inequalities see, for instance, the more recent papers \cite{BBG, BBGHO2, DKO2015, DST, GHO2013}). 

Given $x\in\mathbb X$, a \textbf{greedy set} for $x\in \X$ of order $m$ (or an $m$-greedy set for $x$) is a set of indices $A\subset\N$ such that $|A|=m$ and
$$
\min_{n\in A} |\be_n^*(x)| \geq \max_{n\notin A} |\be_n^*(x)|\,.
$$
A {\bf greedy operator} of order $m$ is any mapping $G_m\colon \X\to\X$ such that
$$
x\in\X
\longmapsto  G_m(x) =G_m[\mathcal B,\X]:=
\sum_{n\in A} \be_n^*(x) \be_{n}\,
$$
with $A$ an $m$-greedy set for $x$, using the convention that  $G_0 =0$. We write $\mathcal G_m$ for the set of all greedy operators of order $m$ and we consider $\mathcal G =\cup_{m\ge 0} \mathcal G_m$.  Given  $G$ and $G'$ in $\mathcal G$ we write $G' < G$ whenever $G'\in \mathcal G_m$ and $G\in \mathcal G_n$ with $0\le m< n$ and the respective supporting sets satisfy $\supp(G')\subset\supp(G)$ (for all $x$). 

Given a finite set $A\subset\N$, we denote by $P_A(x)=\sum_{n\in A}\be_n^*(x)\be_n$ the projection operator, with the convention that any sum over the empty set is zero.

For $A\subset\N$ finite, we denote by $\Psi_A$ the set of all collections of sequences $\bfe = (\e_n)_{n\in A}\subset \mathbb F$ such that $\vert \varepsilon_n\vert=1$ and
$$
\bff_{\bfe A}=\bff_{\bfe A}[\mathcal B,\X]:=\sum_{n\in A}\e_n \be_n.
$$
If $\bfe\equiv 1$, we just write $\bff_A$. Also, every time we have index sets $A\subset B$ and $\bfe\in \Psi_B$, we write $\bff_{\bfe A}$ considering the  natural restriction of $\bfe$ to $A$.

Given $x\in\X$, the error in the best \emph{$m$-term approximation} with respect to the basis $\mathcal B$ is 
$$
\sigma_m(x)=\sigma_m[\mathcal B,\X](x):=\inf\left\lbrace\left\Vert x-y\right\Vert\colon |\supp{(y)}|\le m\right\rbrace,
$$
and the error in the best \emph{$m$-term coordinate approximation} is
$$
\widetilde{\sigma}_m(x)=\widetilde\sigma_m[\mathcal B,\X](x):=\inf\left\lbrace\left\Vert x-P_B(x)\right\Vert\colon  \vert B\vert\leq m\right\rbrace.
$$

Greedy operators are frequently used for $m$-term approximations and, in order to study and quantify the performance of greedy operators one considers, for every $m=1,2, \dots$, the smallest numbers $\mathbf L_m=\mathbf L_m[\mathcal B,\X]$ and $\widetilde{\mathbf{L}}_m=\widetilde{\mathbf{L}}_m[\mathcal B,\X]$ such that for all  $x\in\mathbb X$ and all $G_m\in \mathcal G_m$,
\begin{eqnarray}\label{greedy}
\Vert x-G_m(x)\Vert \leq \mathbf L_m\sigma_m(x),
\end{eqnarray}
and
\begin{eqnarray}\label{agreedy}
\Vert x-G_m(x)\Vert \leq \widetilde{\mathbf L}_m\widetilde\sigma_m(x).
\end{eqnarray}

The parameters $\mathbf{L}_m$ and $\widetilde{\mathbf L}_m$ are called Lebesgue-type parameters and \eqref{greedy} and \eqref{agreedy} their respective  Lebesgue-type inequalities. When $\mathbf L_m = O(1)$, the basis is called greedy \cite{KT} and if $\widetilde{\mathbf L}_m = O(1)$, the basis is called almost-greedy  \cite{DKKT}. Thanks to the main results of  \cite{DKKT} and \cite{KT}, we know that a basis $\mathcal B$ is greedy if and only if $\mathcal B$ is unconditional and democratic, and $\mathcal B$ is almost-greedy if and only if it is quasi-greedy (see Definition~\ref{definitionquasigreedy} below) and democratic. Recall that a basis \txcr{$\mathcal B$} is $K$-\textit{unconditional}, with $K>0$, if
$$
K=K[\mathcal B,\X]:=\sup_{\vert A\vert<\infty}\Vert P_A\Vert<\infty,
$$
and $\mathcal B$ is $D$-\textit{democratic} with $D>0$, if for any pair of finite sets $A, B$ with $|A| \le |B|$ we have 
$$
\Vert \bff_A\Vert \leq D\Vert \bff_B\Vert.
$$

Different estimates for the parameters $\mathbf L_m$ and $\widetilde{\mathbf L}_m$ have been studied in several papers: for example, in \cite{DKO2015, DST, GHO2013} the authors study estimates under the assumption of quasi-greediness, and in \cite{BBG, BBGHO}, for general Markushevich bases. 

In  order to study whether greedy approximations $(G_m(x))_m$ always perform better than the standard linear approximation $(P_m(x))_m$, the  \textit{$m$th residual Lebesgue constant} was introduced in \cite{DKO2015} as follows: $\mathbf L_m^{re}=\mathbf L_m^{re}[\mathcal B, \X]$ is the smallest number such that for all $x\in\X$  and $G_m\in\mathcal G_m$,
$$
\Vert x-G_m(x)\Vert \leq \mathbf L_m^{re}\Vert x-P_m(x)\Vert,
$$
where $P_m$ is the $m$th partial sum, that is, $P_m(x)=P_{A}(x)$ for $A=\lbrace 1,\ldots,m\rbrace$.

If $C_p:=\sup_m \mathbf L_m^{re}<\infty$, the basis is called $C_p$-\textit{partially greedy}. Here, following the spirit of the inequalities \eqref{greedy} and \eqref{agreedy}, and using the definition of $w$-partially greedy bases given in \cite{BDKOW}, we introduce the strong residual Lebesgue-type parameter as follows: given $x\in\X$ and $m\in\N$, we define the \textit{$m$th strong residual error} as
$$
\widehat{\sigma}_m(x)=\widehat\sigma_m[\mathcal B,\X](x):=\inf_{k\leq m}\Vert x-P_k(x)\Vert.
$$
Then, for each $m=1,2,\dots,$ we define the \textit{strong residual Lebesgue-type parameter} as the smallest number $\widehat{\mathbf L}_m=\widehat{\mathbf L}_m[\mathcal B,\X]$ such that for all $x\in\X$  and $G_m\in\mathcal G_m$,
$$
\Vert x-G_m(x)\Vert \leq \widehat{\mathbf L}_m\widehat{\sigma}_m(x).
$$
\vskip .2cm

\begin{definition}
An M-basis $\mathcal B$ is $C_{sp}$-\textbf{strong partially greedy} if
$$
C_{sp}:=\sup_m\widehat{\mathbf L}_m<\infty.
$$
\end{definition}

Clearly, $\mathbf L_m^{re}\leq \widehat{\mathbf L}_m$ and, if the basis is Schauder, $\widehat{\mathbf L}_m \approx \mathbf L_m^{re}$. Consequently, if $\mathcal B$ is Schauder, the basis is partially greedy if and only if it is strong partially greedy. For quasi-greedy bases, different bounds for $\mathbf L_m^{re}$ were studied in \cite{DKO2015}. 

\begin{definition}[\cite{Wo}]\label{definitionquasigreedy}
An M-basis $\mathcal B$ is \textbf{quasi-greedy} if $C_q:=\sup_m g_m^c<\infty$, where~\footnote{We use the notation $\|G\|=\sup_{x\not=0}\|G(x)\|/\|x\|$ and $\|I-G\|=\sup_{x\not=0}\|x-G(x)\|/\|x\|$, even if $G:\mathbb X\to\mathbb X$ is a non-linear map.} 
$$
g_m^c:=\sup_{G\in {\bigcup}_{k\leq m} \mathcal G_k}\Vert I-G\Vert.
$$
\end{definition}
Related to the parameter $g_m^c$, we also consider (see \cite{BBG}) 
$$
g_m:=\sup_{G\in \bigcup_{k\leq m}\mathcal G_k}\Vert G\Vert\qquad \text{and}\qquad\  \tilde{g}_m:= \sup_{\substack{G\in\bigcup_{k\leq m}\mathcal G_k
 \\  G'<G}} \Vert G-G' \Vert. 
$$

Notice that as $\mathcal G_0=\{0\}$, we may define $g_m^c$ (and also $g_m$) for $m=0$ being $g_0^c=1$ (and $g_0=0$). This is the only parameter we will use with $m=0$. On the other hand, in the definitions of the parameters that follow, we avoid without specifying it the undesirable situations in which the denominator could be zero.

\begin{definition}[\cite{BBG, Wo}]\label{C_u:unconditionall for cte coeff}
An M-basis $\mathcal B$ is $C_u$-\textbf{unconditional for constant coefficients} if $C_u:=\sup_m \gamma_m<\infty$, where
$$\gamma_m := \sup\left\lbrace \dfrac{\Vert \bff_{\bfe A}\Vert}{\Vert \bff_{\bfe B}\Vert} : A\subset B, \vert B\vert\leq m, \bfe\in\Psi_B\right\rbrace.$$
\end{definition}

\begin{remark}\label{inequalities}\rm 
Strightforward from the above definitions, for all $m\in \N$, we have the following inequelities. The nontrivial calculation in (ii) can be found in \cite[Lemma~2.1]{BBG}.
\begin{enumerate}[\upshape (i)] 
\item  $\vert g_m-g_m^c\vert\leq 1$, 
\item $\tilde{g}_m\leq \min\lbrace 2\min\lbrace g_m, g_m^c\rbrace,\, g_m g_m^c\rbrace$,
\item $\gamma_m\le \min\{g_m, g_m^c\}$.
\end{enumerate}
\end{remark}
\smallskip

\begin{definition}[\cite{AABW}]
An M-basis $\mathcal B$ is $C_{ql}$-\textbf{quasi-greedy for largest coefficients} if $C_{ql}:=\sup_m \mathbf q_m<\infty$, where
$$
\mathbf q_m := \sup\left\lbrace \dfrac{\Vert \bff_{\bfe A}\Vert}{\Vert x+\bff_{\bfe A}\Vert} :\  \vert A\vert \leq m,\ A\cap\supp(x)=\emptyset,\  \max_n \vert \be_n^*(x)\vert \leq 1,\ \bfe\in\Psi_A\right\rbrace.
$$
\end{definition}

\begin{definition}\label{definitionsuperconservative}
An M-basis $\mathcal{B}$  is $C_{sc}$-\textbf{superconservative} if $C_{sc}:=\sup_m \mathbf{sc}_m<\infty$, where $\mathbf{sc}_m$ is the $m$th \textbf{superconservative parameter} of a basis defined as follows:
$$\mathbf{sc}_m := \sup\left\lbrace \dfrac{\Vert\bff_{\bfe A}\Vert}{\Vert \bff_{\bfe' B}\Vert}:\ \vert A\vert \le \vert B\vert\leq m,\ A\leq m, A<B,\ \bfe\in\Psi_A,\ \bfe'\in\Psi_B\right\rbrace.
$$

\noindent When $\bfe\equiv\bfe'\equiv 1$, we write $\mathbf{c}_m$ instead of $\mathbf{sc}_m$ and call this constant the  $m$th   \textbf{conservative parameter}. Also, we say that $\mathcal{B}$ is $C_{c}$-\textbf{conservative} if $C_{c}:=\sup_m \mathbf{c}_m<\infty$.
\end{definition}

\begin{remark} \label{remarknaturalextension} {\rm Conservative (Schauder) bases were introduced in \cite{DKKT}: a basis is conservative with constant $C$ if $\| \bff_{ A}\|\le C \| \bff_{ B}\|$ whenever $A$ and $B$ are finite sets with $A<B$ and $|A|\le |B|$. For such bases, our definition of a conservative basis is equivalent, with the same constant. Indeed, if $\mathcal{B}$ is $C_c$-conservative, by Definition~\ref{definitionsuperconservative},  given $A<B$ finite sets with $|A|\le |B|$,  for any $m\ge B$ we have 
$$
\| \bff_{ A}\|\le \mathbf{c}_m\| \bff_{ B}\|\le\sup_m \mathbf{c}_m \| \bff_{ B}\|= C_c\| \bff_{ B}\|, 
$$
so $\mathcal{B}$ is also $C_c$-conservative as defined in \cite{DKKT}. On the other hand, if $\mathcal{B}$ is $C$-conservative according to the definition in~\cite{DKKT}, it is immediate that $\mathbf{c}_m\le C$ for all $m$, so $C_{c}\le C$.}
\end{remark}

\begin{remark} \rm In \cite{DKO2015}, the authors define a more restrictive  $m$th conservative parameters $\mathbf{c}(m)$. The difference between our definition and that of \cite{DKO2015} is that in the latter, the supremum is taken over all finte sets $A$ and $B$ such that $|A|=|B| \le m$ and $A\leq m <B$  which entails the additional conditions of taking $m<B$ and $|A|=|B|$. For the purposes of this article, we find the definition of $\mathbf{c}_m$ more convenient, not only because its definition is less restrictive than that of $\mathbf{c}(m)$, but also because (by Remark \ref{remarknaturalextension}) the sequence ($\mathbf c_m$) allows us to recover the original definition of conservative bases of \cite{DKKT}.
\end{remark}

Finally, we introduce the following concept, which will  be used to study the behaviour of $\widehat{\mathbf L}_m$.

\begin{definition}\label{definitionpslc}
An M-basis $\mathcal B$ is $C_{pl}$-\textbf{partially symmetric for largest coefficients} ($C_{pl}$-PSLC for short) if $C_{pl}:=\sup_m \mathbf \omega_m<\infty$, where
\begin{eqnarray*}
\omega_m := \sup
\left\lbrace 
\dfrac{\Vert x+t\bff_{\bfe A}\Vert}{\Vert x+t\bff_{\bfe' B}\Vert}: \quad 
\begin{array}{ll}  
\vert A\vert \leq \vert B\vert\leq m, & A<\supp(x)\cupdot B, A\le m, \\ 
 \bfe\in\Psi_A,\bfe'\in\Psi_B, & |t| \geq\max_n\vert\be_n^*(x)\vert
 \end{array}
\right\rbrace.
\end{eqnarray*}
\end{definition}
\smallskip
Notice that  $\omega_m\ge 1$ for all $m$, which is clear from  the  definition if we take $A=B=\emptyset$.

\begin{remark}\label{remarkoriginalpslc} 
{\rm Note that a slight variant of the argument of Remark~\ref{remarknaturalextension} shows that a basis is $C_{pl}$-PSLC if and only if there is $K>0$ such that for any finite sets $A$ and $B$, any  $x \in \mathbb{X}$ with  $|A|\le |B|, A<\supp(x)\cupdot B$,  for $|t|\ge \max_{n}\vert \be_n^{*}(x)\vert$, $\bfe\in\Psi_A,\bfe'\in\Psi_B$, 
$$
\Vert x+t\bff_{\bfe A}\Vert \le K \Vert x+t\bff_{\bfe' B}\Vert, 
$$ 
and $C_{pl}$ is the minimum $K$ for which this inequality holds. }
\end{remark}

The parameter $\omega_m$ is a weaker version of the parameter $\nu_m$ used to define the constant associated to Property (A) that appears in \cite{AW, BBG, DKOSZ}: a  basis has the $C_a$-Property (A) (or is $C_a$-symmetric for largest coefficients) if $C_a:=\sup_m \nu_m<\infty$, where
\begin{eqnarray}\label{sym}
\nu_m:= 
\sup\hskip -.1cm\left\lbrace 
\dfrac{\Vert x+t\bff_{\bfe A}\Vert}{\Vert x+t\bff_{\bfe' B}\Vert}: 
\begin{array}{l}  
\vert A\vert \leq \vert B\vert\leq m,\  A\cap B=\emptyset,\ \supp(x)\cap (A\cup B)=\emptyset \\ 
 \vert t\vert\geq\max_n\vert\be_n^*(x)\vert,\  \bfe\in\Psi_A,\ \bfe'\in\Psi_B
\end{array}
\hskip -.15cm \right\rbrace\hskip -.1cm.
\end{eqnarray}
\vskip .2cm

Among the definitions given above, the parameters $g_m, g_m^{c}, \tilde{g}_m, \mathbf{q}_m, \gamma_m,  \mathbf{L}_m, \widetilde{\mathbf{L}}_m$ and $\mathbf{L}_m^{re}$ are well-known in the literature and were studied, for instance, in \cite{AABW,BBG,BBGHO2,DKKT,DKO2015,Wo}.

Our main results for the strong residual parameters $(\widehat{\mathbf L}_m)_m$ are the following.

\begin{proposition}\label{triv}
	For every $m=1,2,\dots$,
	$$\widehat{\mathbf L}_m \leq 1+2\kappa m,$$
	where $\kappa = \sup_{m,n} \Vert \be_m\Vert\Vert\be_n^*\Vert$.
\end{proposition}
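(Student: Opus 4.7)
The plan is to fix $x \in \X$, a greedy operator $G_m \in \mathcal G_m$ with associated $m$-greedy set $A$, and an arbitrary $k \leq m$, and to establish the pointwise bound
$$
\|x - G_m(x)\| \leq (1 + 2\kappa m)\,\|x - P_k(x)\|.
$$
Taking the infimum over $k \leq m$ then yields the desired inequality $\widehat{\mathbf L}_m \leq 1 + 2\kappa m$.

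The first step is to write $x - G_m(x) = (x - P_k(x)) + (P_k(x) - G_m(x))$ and apply the triangle inequality, reducing matters to showing $\|P_k(x) - G_m(x)\| \leq 2\kappa m\,\|x - P_k(x)\|$. Since the terms indexed by $A \cap \{1,\ldots,k\}$ cancel, $P_k(x) - G_m(x)$ becomes a signed sum of $\be_n^*(x)\be_n$ over $A \triangle \{1,\ldots,k\}$, which contains at most $m + k \leq 2m$ indices. It therefore suffices to bound each summand in norm by $\kappa\,\|x - P_k(x)\|$.

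To do so, I set $y := x - P_k(x)$, so that $\be_n^*(y) = \be_n^*(x)$ for $n > k$ and $\be_n^*(y) = 0$ for $n \leq k$. For $n \in A \setminus \{1,\ldots,k\}$ the index satisfies $n > k$, giving $\|\be_n^*(x)\be_n\| = \|\be_n^*(y)\be_n\| \leq \|\be_n^*\|\,\|\be_n\|\,\|y\| \leq \kappa\,\|y\|$. For $n \in \{1,\ldots,k\}\setminus A$, I would use the greedy property of $A$: a quick cardinality check shows that if this set is nonempty, then $A$ cannot be contained in $\{1,\ldots,k\}$, so there exists $j \in A$ with $j > k$; the inequality $|\be_n^*(x)| \leq |\be_j^*(x)| = |\be_j^*(y)|$ then gives $\|\be_n^*(x)\be_n\| \leq \|\be_j^*\|\,\|\be_n\|\,\|y\| \leq \kappa\,\|y\|$.

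The only place requiring care (rather than a genuine obstacle) is the degenerate case $A \subseteq \{1,\ldots,k\}$: cardinality forces $k = m$ and $A = \{1,\ldots,m\}$, in which case $P_k(x) - G_m(x) = 0$ and the bound is trivial. Summing the at most $2m$ contributions yields the claim, and the infimum over $k$ closes the proof.
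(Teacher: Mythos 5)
Your argument is correct, but it takes a different (more self-contained) route than the paper. The paper's proof is a two-line reduction: since each $P_k(x)$ with $k\le m$ is a coordinate projection onto a set of cardinality at most $m$, one has $\widehat{\sigma}_m(x)\ge \widetilde{\sigma}_m(x)$ and hence $\widehat{\mathbf L}_m\le \widetilde{\mathbf L}_m$, after which the bound $\widetilde{\mathbf L}_m\le 1+2\kappa m$ is simply quoted from \cite[Theorem 1.8]{BBG}. You instead reprove the needed estimate from scratch: the decomposition $P_k(x)-G_m(x)=P_{D\setminus A}(x)-P_{A\setminus D}(x)$ with $D=\{1,\dots,k\}$, the count $|A\triangle D|\le 2m$, the bound $\|\be_n^*(x)\be_n\|\le\kappa\|x-P_k(x)\|$ via biorthogonality for $n\in A\setminus D$ and via the greedy ordering (comparing against some $j\in A$ with $j>k$) for $n\in D\setminus A$, and the correct dispatch of the degenerate case $A=D$. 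All of these steps check out, including the cardinality argument guaranteeing the existence of $j\in A$ with $j>k$ when $D\setminus A\ne\emptyset$. What the paper's approach buys is brevity and a clean conceptual reduction to the known Lebesgue parameter $\widetilde{\mathbf L}_m$; what yours buys is independence from the external reference, at the cost of essentially redoing the proof of the cited inequality in the special case where the competing projection is a partial sum. One minor point worth making explicit: the infimum defining $\widehat{\sigma}_m$ includes $k=0$ (the paper uses this in the proof of Theorem~\ref{one2y3}), so you should note that your pointwise bound also holds for $k=0$, where $D=\emptyset$ and the estimate degenerates to $\|G_m(x)\|\le \kappa m\|x\|$ --- your argument covers this verbatim, but it deserves a word.
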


\begin{theorem}\label{one}
	For each $m=1,2,\dots$,
	\begin{eqnarray*}
	\widehat{\mathbf L}_m \leq  g_m^c+\tilde g_m\mathbf{sc}_m.
	\end{eqnarray*}
\end{theorem}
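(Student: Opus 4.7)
Fix $x \in \mathbb{X}$, a greedy operator $G_m(x) = P_A(x)$ with $|A|=m$, and an integer $0 \le k \le m$; set $B := \{1,\dots,k\}$, $y := x - P_B(x)$. Since $\widehat{\sigma}_m(x) = \inf_{k \le m}\|x - P_k(x)\|$, it suffices to prove
$$\|x - P_A(x)\| \le (g_m^c + \tilde{g}_m\, \mathbf{sc}_m)\, \|y\|$$
for each such $k$. Write $j := |A \cap B|$, $D := A \setminus B$, $E := B \setminus A$, and $\alpha := \min_{n \in A}|e_n^*(x)|$ (we may assume $\alpha>0$, else $x = P_A(x)$). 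Then $|D| = m-j$ and $|E| = k-j$, so $|E| \le |D| \le m$, $E \le k \le m$, and $E < D$.

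The first half is a decomposition. Because $|e_n^*(y)| \ge \alpha$ on $D$ while $|e_n^*(y)|$ is either $0$ (on $B$) or at most $\alpha$ (on $\mathbb{N}\setminus(A\cup B)$), the set $D$ is a greedy set for $y$ of order $m-j$; hence $\|y - P_D(y)\| \le g_m^c \|y\|$. Since $D \cap B = \emptyset$ forces $P_D(x) = P_D(y)$, one checks the identity
$x - P_A(x) = (y - P_D(y)) + P_E(x),$
which reduces the task to proving the key inequality $\|P_E(x)\| \le \tilde{g}_m\, \mathbf{sc}_m\, \|y\|$.

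For this key inequality my plan is threefold. (i) Use an averaging / layer-cake identity (in the real case, $P_E(x) = \int_0^\alpha \bff_{\bfe E_t}\, dt$ where $\bfe_n = \operatorname{sign}(e_n^*(x))$ on $E$ and $E_t = \{n \in E : |e_n^*(x)| \ge t\}$; the complex case requires the analogous adjustment) to reduce $P_E(x)$ to sign vectors. (ii) For each $t \in (0,\alpha]$ apply $\mathbf{sc}_m$: since $|E_t| \le |E| \le |D| \le m$, $E_t \le m$, $E_t < D$, we get $\|\bff_{\bfe E_t}\| \le \mathbf{sc}_m \|\bff_{\bfe' D}\|$ with $\bfe'_n = \operatorname{sign}(e_n^*(y))$ on $D$; integrating yields $\|P_E(x)\| \le \alpha\, \mathbf{sc}_m\, \|\bff_{\bfe' D}\|$. (iii) Bound $\alpha\,\|\bff_{\bfe' D}\| \le \tilde{g}_m \|y\|$ via the auxiliary vector $v := (y - P_D(y)) + \alpha\, \bff_{\bfe' D}$, which has $D$ as an $|D|$-greedy set with $P_D(v) = \alpha\, \bff_{\bfe' D}$; taking $G = P_D \in \mathcal{G}_{|D|}$ and $G' = 0$ (so $G' < G$) in the definition of $\tilde{g}_m$ gives $\alpha\,\|\bff_{\bfe' D}\| = \|P_D(v)\| \le \tilde{g}_m \|v\|$.

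The principal obstacle is the closing estimate $\|v\| \le \|y\|$, i.e.\ the truncation fact that replacing the $D$-coefficients of $y$ (all of magnitude $\ge \alpha$) by signed copies of magnitude exactly $\alpha$ does not increase the norm. A naive triangle bound $\|v\| \le \|y - P_D(y)\| + \alpha\,\|\bff_{\bfe' D}\| \le g_m^c \|y\| + \alpha\,\|\bff_{\bfe' D}\|$ is circular with the target inequality, so a more delicate argument---a Wojtaszczyk-style truncation lemma exploiting the greedy structure of $y$, possibly absorbed into the $\tilde g_m$ factor via Remark~\ref{inequalities}---is needed to finish. Once this truncation bound is established, combining (i)--(iii) with $\|y - P_D(y)\| \le g_m^c \|y\|$ delivers the claim.
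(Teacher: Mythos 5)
Your skeleton coincides with the paper's proof: the decomposition $x-G_m(x)=\bigl(y-P_D(y)\bigr)+P_E(x)$ with $y=x-P_k(x)$, the bound $\|y-P_D(y)\|\le g_m^c\|y\|$ coming from the fact that $D=A\setminus B$ is a greedy set for $y$, and the treatment of $P_E(x)$ by first reducing to sign vectors (your layer-cake integral is just Lemma~\ref{con} in integral form) and then applying $\mathbf{sc}_m$ to the pair $E_t<D$ are all exactly the paper's steps. The only divergence is the closing estimate $\alpha\,\|\bff_{\bfe' D}\|\le \tilde g_m\|y\|$, and there your proposed route does not work: the inequality $\|v\|\le\|y\|$ for $v=T_\alpha(y)$ is false in general, since truncation is not a norm-one operation for quasi-greedy (let alone general Markushevich) bases --- Lemma~\ref{bb2} only gives $\|T_\alpha(y)\|\le g^{c}_{|\Lambda_\alpha(y)|}\|y\|$, and this factor is sharp. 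Patching your step (iii) with Lemma~\ref{bb2} would therefore only yield the weaker estimate $\widehat{\mathbf L}_m\le g_m^c+\tilde g_m\, g_m^c\,\mathbf{sc}_m$, not the stated one.

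The missing ingredient is precisely Lemma~\ref{bb1} (that is, \cite[Lemma~2.3]{BBG}), which is quoted in Section~\ref{pre} for exactly this purpose: if $D$ is a greedy set for $y$ of order $|D|\le m$ and $\bfe'$ is the sign sequence of $y$ on $D$, then $\min_{n\in D}|\be_n^*(y)|\,\|\bff_{\bfe' D}\|\le \tilde g_{|D|}\|y\|\le\tilde g_m\|y\|$; since $\alpha\le\min_{n\in D}|\be_n^*(y)|$, this is your step (iii) with the correct constant. Substituting it for your auxiliary-vector argument turns your proposal into the paper's proof. Note that the content of Lemma~\ref{bb1} is not a truncation bound at all: it is obtained in \cite{BBG} by writing $t\,\bff_{\bfe' D}$ as a combination of differences of nested greedy projections of $y$, which is exactly the quantity the parameter $\tilde g_m$ (as opposed to $g_m$ or $g_m^c$) is designed to control; this is why the sharper constant $\tilde g_m$ survives where your triangle-inequality and truncation attempts become either circular or lossy.
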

\begin{proposition}\label{propositionnewdef}For each $m=1,2,\dots$,
$$
\omega_m\le \max_{1\le k\le m}\widehat{\mathbf L}_k.
$$
\end{proposition}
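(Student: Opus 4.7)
Given a valid configuration $(x, A, B, \bfe, \bfe', t)$ for $\omega_m$, the plan is to build a single auxiliary vector $\tilde y$, together with an order $k \le m$, a greedy operator $G_k \in \mathcal G_k$ and a partial-sum index $j \le k$, such that
\[
\tilde y - G_k(\tilde y) = x + t\bff_{\bfe A}
\qquad\text{and}\qquad
\tilde y - P_j(\tilde y) = x + t\bff_{\bfe' B}.
\]
Once these two identities are in place, the definition of $\widehat{\mathbf L}_k$ immediately yields $\|x + t\bff_{\bfe A}\| \le \widehat{\mathbf L}_k\|x + t\bff_{\bfe' B}\|$, and taking supremum over the configuration gives the claim.

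The naive attempt $\tilde y := x + t\bff_{\bfe A} + t\bff_{\bfe' B}$, $k:=|B|$, $G_k = P_B$, $j := \max A$ almost works: since $A, \supp(x), B$ are pairwise disjoint and $A < \supp(x)\cupdot B$, the largest coefficients of $\tilde y$ lie on $A \cup B$ with magnitude $|t| \ge \max_n |\be_n^*(x)|$, so $B$ is a valid $|B|$-greedy set, and $P_{\max A}(\tilde y) = t\bff_{\bfe A}$. This does give both identities, provided $\max A \le |B|$. The main obstacle is what happens when $\max A > |B|$, which is entirely possible since we only know $\max A \le m$ and $|B|$ can be as small as $|A|$.

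To handle the general case, I will enlarge the order and pad $\tilde y$ accordingly. Set $k := \max(|B|, \max A)$, which is at most $m$ by the hypotheses $|B|\le m$ and $A \le m$, and pick a dummy set $D \subseteq \{1,\ldots,\max A\}\setminus A$ with $|D| = k - |B|$; such $D$ exists because $|A| \le |B|$ forces $\max A - |A| \ge k - |B|$. Defining $\tilde y := x + t\bff_{\bfe A} + t\bff_{\bfe' B} + t\sum_{n \in D}\be_n$, the set $B \cup D$ has cardinality $k$, all its coefficients in $\tilde y$ share magnitude $|t|$ with those on $A$, and the coefficients on $\supp(x)$ remain bounded by $|t|$; hence $B \cup D$ is an admissible $k$-greedy set, and a greedy operator picking it gives $\tilde y - G_k(\tilde y) = x + t\bff_{\bfe A}$. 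On the other hand, $\supp(\tilde y)\cap\{1,\ldots,\max A\} = A \cup D$, so $P_{\max A}(\tilde y) = t\bff_{\bfe A} + t\sum_{n \in D}\be_n$ and $\tilde y - P_{\max A}(\tilde y) = x + t\bff_{\bfe' B}$; since $\max A \le k$, the index $j = \max A$ is admissible in $\widehat{\sigma}_k(\tilde y)$, and the argument concludes as in the first paragraph. The technical heart of the proof is this padding trick, enabled jointly by $A \le m$ and $|A| \le |B|$, which reconciles the greedy-operator order with the partial-sum index.
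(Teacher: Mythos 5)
Your proof is correct and takes essentially the same route as the paper's: both arguments pad the auxiliary vector with a dummy set $D\subseteq\{1,\dots,\max A\}\setminus A$ (which exists because $|A|\le|B|$ and $A\le m$) so that $B\cup D$ is an admissible $k$-greedy set while $P_{\max A}$ removes exactly $A\cup D$, and then invoke the definition of $\widehat{\mathbf L}_k$. The only immaterial difference is that the paper assigns coefficients $t+\eta$ to $B\cup D$ and lets $\eta\to 0$, whereas you rely on the fact that ties are permitted in greedy sets so that some greedy operator selects $B\cup D$; both are valid.
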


\begin{theorem}\label{one2y3}
	For each $m=1,2,\dots$, 
	\begin{eqnarray*}
 g_{m}^c\le \ \widehat{\mathbf L}_m \le g_{m-1}^c \omega_m.
 \end{eqnarray*}
Moreover, $\widehat{\mathbf L}_1= \omega_1.$
\end{theorem}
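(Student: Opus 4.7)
The plan is to prove the three assertions in turn: the lower bound $g_m^c \le \widehat{\mathbf L}_m$, the upper bound $\widehat{\mathbf L}_m \le g_{m-1}^c \omega_m$, and then deduce $\widehat{\mathbf L}_1 = \omega_1$ from the $m=1$ case and Proposition \ref{propositionnewdef}.

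For the lower bound, I first observe that for $G \in \mathcal G_m$ the inequality $\|x - G(x)\| \le \widehat{\mathbf L}_m\,\widehat{\sigma}_m(x) \le \widehat{\mathbf L}_m\,\|x\|$ is immediate, since $\widehat{\sigma}_m(x) \le \|x - P_0(x)\| = \|x\|$. To handle $G \in \mathcal G_k$ with $k < m$, I would prove monotonicity of $(\widehat{\mathbf L}_m)_m$ by a padding trick: given $x$ and $G \in \mathcal G_k$, build $y = x + t(\be_{p_1} + \cdots + \be_{p_{m-k}})$ with $|t| \ge \max_n|\be_n^*(x)|$ and distinct $p_i \notin \supp(x)$, chosen so that $A \cup \{p_1, \dots, p_{m-k}\}$ (where $A = \supp G(x)$) is $m$-greedy for $y$. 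The resulting $m$-greedy operator $G_m$ satisfies $y - G_m(y) = x - G(x)$, and the key step is to arrange the $p_i$ together with some index $j \le m$ so that $P_j(y) - P_j(x) = t\sum_i \be_{p_i}$, forcing $\widehat{\sigma}_m(y) \le \widehat{\sigma}_k(x)$. Applying the definition of $\widehat{\mathbf L}_m$ to $y$ and then taking the supremum over $k \le m$ and $G \in \mathcal G_k$ yields $g_m^c \le \widehat{\mathbf L}_m$.

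For the upper bound, fix $x$, $G_m \in \mathcal G_m$ with $A := \supp G_m(x)$, $|A|=m$, and let $j^\star \in \{0,1,\dots,m\}$ attain $\widehat{\sigma}_m(x) = \|x - P_{j^\star}(x)\|$. Set $y := x - P_{j^\star}(x)$, $B := \{1,\dots,j^\star\}$, $A_1 := A \cap B$, $A_2 := A \setminus B$, and use the decomposition
\begin{equation*}
x - G_m(x) \;=\; \bigl(y - P_{A_2}(y)\bigr) + P_{B \setminus A_1}(x),
\end{equation*}
in which $\supp(P_{B \setminus A_1}(x)) \subset B$ sits strictly before $\supp(y - P_{A_2}(y)) \subset \{j^\star+1,\dots\}$, and $A_2$ is $|A_2|$-greedy for $y$ (inherited from the $m$-greediness of $A$ for $x$, since the coefficients of $y$ outside $A_2$ are controlled by those of $x$ outside $A$). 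I would then apply PSLC in the form of Remark \ref{remarkoriginalpslc} to bound $\|x - G_m(x)\|$ by $\omega_m$ times the norm of a vector whose head has been replaced by an indicator comparable to $A_2$, reducing the analysis to the tail; the greedy estimate $\|I - P_{A_2}\| \le g_{|A_2|}^c \le g_{m-1}^c$ applied to $y$ then delivers $\|x - G_m(x)\| \le g_{m-1}^c\,\omega_m\,\|y\| = g_{m-1}^c\,\omega_m\,\widehat{\sigma}_m(x)$.

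The hard part I expect is twofold. First, the edge case $A_1 = \emptyset$: here $|A_2| = m$ and a naive estimate gives $g_m^c$ instead of $g_{m-1}^c$; one has to exploit the freedom in PSLC (the condition $|A| \le |B|$ only requires inequality) to shift a single coordinate of $A_2$ into the initial block, dropping the greedy order by one. Second, PSLC as stated only compares indicators of constant modulus, so applying it to $P_{B\setminus A_1}(x)$, whose coefficients vary in size, will require an intermediate reduction — typically by processing coordinates one at a time or by a scaling argument — since the paper does not supply an ``arbitrary-coefficient'' version of PSLC a priori. Finally, for $m=1$: the upper bound gives $\widehat{\mathbf L}_1 \le g_0^c\,\omega_1 = \omega_1$ (using $g_0^c = 1$), while Proposition \ref{propositionnewdef} gives $\omega_1 \le \max_{k\le 1}\widehat{\mathbf L}_k = \widehat{\mathbf L}_1$, yielding equality.
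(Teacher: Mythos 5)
Your upper--bound strategy is essentially the paper's: the same decomposition of $x-G_m(x)$ into the tail $P_{(A\cup D)^c}(x-P_k(x))$ plus the initial-segment leftover $P_{D\setminus A}(x)$, a PSLC replacement of the leftover by a constant-modulus block on $A\setminus D$, and a quasi-greedy estimate on what remains; the $m=1$ equality via Proposition~\ref{propositionnewdef} is also exactly the paper's argument. However, the two points you flag as ``the hard part'' are genuine gaps, and the fixes you sketch are not the ones that work. For the variable-coefficient issue, the paper's resolution is Lemma~\ref{pl}: since every coefficient of $z=P_{D\setminus A}(x)$ has modulus at most $t=\min_{n\in A}|\be_n^*(x)|$, Lemma~\ref{con} places $z$ in $t\cdot\co(\{\bff_{\tilde\bfe (D\setminus A)}:\tilde\bfe\in\Psi_{D\setminus A}\})$, and convexity of the norm transfers the PSLC inequality to $z$ with the same constant $\omega_m$; ``processing coordinates one at a time'' would degrade the constant and a single rescaling cannot handle coefficients of different moduli. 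For the source of $g_{m-1}^c$, there is no edge case and no coordinate-shifting: after the replacement the vector in hand is exactly the truncation $T_t(x-P_k(x))$, and Lemma~\ref{bb2} bounds it by $g^c_{|\Lambda_t(x-P_k(x))|}\Vert x-P_k(x)\Vert$. Because the minimum defining $t$ is attained at some index of $A$, either that index lies in $A\setminus D$ and is excluded from the strict-inequality set $\Lambda_t(x-P_k(x))\subseteq A\setminus D$, or it lies in $A\cap D$ and then $|A\setminus D|\le m-1$; in both cases $|\Lambda_t(x-P_k(x))|\le m-1$. Your proposed shift of an element of $A_2$ into the ``head'' does not fit the PSLC format, since in $\omega_m$ the head set must precede $\supp(x)\cupdot B$ and satisfy $A\le m$, while $A_2$ lives in the tail.

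On the lower bound, the paper takes $k=0$ to get $\widehat\sigma_m(x)\le\Vert x\Vert$, which settles the contribution of $\mathcal G_m$ to $g_m^c$. You are right that $g_m^c$ is a supremum over $\bigcup_{k\le m}\mathcal G_k$, so the orders $k<m$ deserve comment, but your padding construction does not close this as described: for $A\cup\{p_1,\dots,p_{m-k}\}$ to be $m$-greedy for $y$ the padding coefficient must satisfy $|t|\ge\max_{n\notin A}|\be_n^*(x)|$ and so cannot be sent to $0$; the positions $p_i$ cannot always be placed inside $\{1,\dots,m\}$ (take $\supp(x)\supseteq\{1,\dots,m\}$); and even when they can, your inequality reads $\widehat\sigma_m(y)\le\Vert x-P_j(x)\Vert$ for some $j\ge1$, which is not bounded by $\Vert x\Vert$ in general. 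As written, your argument establishes $\sup_{G\in\mathcal G_m}\Vert I-G\Vert\le\widehat{\mathbf L}_m$ but not the full inequality $g_m^c\le\widehat{\mathbf L}_m$, so this step still needs a correct treatment of the lower-order operators (or an explicit monotonicity statement for the quantities involved).
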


On the other hand, we characterize strong partially greedy bases in terms of partial greediness and quasi-greediness, and $1$-strong partially greedy bases in terms of the $1$-PSLC property. Also, we prove that $1$-partially greedy bases are strong partially greedy. 

\begin{theorem}\label{two}
An M-basis $\mathcal B$ is $1$-strong partially greedy  if and only if  $\mathcal B$  is $1$-partially symmetric for largest coefficients.
\end{theorem}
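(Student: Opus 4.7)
The plan is to derive both implications by chaining together the results that immediately precede the theorem, namely Proposition~\ref{propositionnewdef} and Theorem~\ref{one2y3}; no fresh norm estimate should be needed, and in particular I would not attempt a direct comparison of $\|x-G_m(x)\|$ with $\|x-P_k(x)\|$ from scratch.

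For the forward implication I would simply invoke Proposition~\ref{propositionnewdef}. If $\widehat{\mathbf L}_k\le 1$ for every $k$, then $\omega_m\le \max_{1\le k\le m}\widehat{\mathbf L}_k\le 1$, and since the paper already points out that $\omega_m\ge 1$ (take $A=B=\emptyset$ in Definition~\ref{definitionpslc}), we get $\omega_m=1$ for every $m$, so $\mathcal B$ is $1$-PSLC.

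For the reverse implication I would induct on $m$ using Theorem~\ref{one2y3}. The base case is free: $\widehat{\mathbf L}_1=\omega_1=1$ is stated explicitly there. For the inductive step, assuming $\widehat{\mathbf L}_{m-1}=1$, the inequality $g_{m-1}^c\le \widehat{\mathbf L}_{m-1}$ from Theorem~\ref{one2y3} forces $g_{m-1}^c\le 1$; combining this with $\omega_m=1$ in the upper bound $\widehat{\mathbf L}_m\le g_{m-1}^c\omega_m$ of the same theorem yields $\widehat{\mathbf L}_m\le 1$. The matching lower bound $\widehat{\mathbf L}_m\ge 1$ follows from $\widehat{\mathbf L}_m\ge g_m^c\ge \|I-0\|=1$, using that $0\in \mathcal G_0\subseteq \bigcup_{k\le m}\mathcal G_k$. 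This closes the induction and gives $C_{sp}=1$.

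The one spot where I would be careful is in justifying the trivial lower bounds $\omega_m\ge 1$ and $g_m^c\ge 1$: without them, the chain of one-sided inequalities above delivers only $\le 1$ rather than the equalities that the theorem demands. Both are immediate from a degenerate choice (the empty pair in $\omega_m$, the zero operator in $g_m^c$), but they are what actually converts the sandwich of Theorem~\ref{one2y3} into the equivalence of the two $C=1$ conditions. Beyond this routine check, the entire argument is a short induction bridging two previously proved results.
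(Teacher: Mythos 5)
Your proposal is correct and follows essentially the same route as the paper: the forward implication via Proposition~\ref{propositionnewdef} together with $\omega_m\ge 1$, and the reverse implication by the induction $1\le g_m^c\le \widehat{\mathbf L}_m\le g_{m-1}^c\,\omega_m$ drawn from Theorem~\ref{one2y3}. Your version merely spells out the inductive step and the trivial lower bounds slightly more explicitly than the paper does.
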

The above result is an improvement with respect to Theorem \ref{one2y3}, since it allows us to deduce that PSLC with $C_{pl}=1$ implies quasi-greediness with $C_q=1$. 

The paper is structured as follows: in Section \ref{pre}, we give some basic results that will be used later. In Section \ref{main}, we prove Theorem \ref{one} and the respective corollaries, whereas in Section \ref{onep}, we prove Theorem \ref{two} and give a characterization of $1$-PSLC bases. In Section~\ref{sectionextensions}, we discuss the relation between the concepts of partial greediness and strong partial greediness, and extend \cite[Theorem 3.2]{DKKT} to the context of Markushevich bases.

\section{Preliminary results}\label{pre}

This section is devoted to giving some different estimates of the  parameters $\omega_m$ and $\mathbf{sc}_m$. In order to do so we
recall the definition of the \textit{truncation operator}, which was first considered in \cite{DKK}. We use some of its properties that connect this operator with the quasi-greedy parameter. Also, we appeal to several lemmas given in \cite{BBG} that we list below for the sake of the reader.

\subsection*{Truncation Operator}
For each $t>0$, we define the $t$-truncation of $x\in\mathbb F$ by

\[
T_t(x)=\begin{cases}
t\,\sgn(x)\quad   & \textit{if}\; \vert x\vert > t,\\
x& \textit{if}\; \vert x\vert \le t.
\end{cases}
\]
 \
We can extend $T_t$ to an operator in $\X$  - which we still call $T_t$ -  by formally assigning $T_t(x)\sim \sum_{n=1}^\infty T_t(\be_n^*(x))\be_n$, that is,
$$
T_t(x):=t\bff_{\bfe \Lambda_{t}(x)}+(I-P_{\Lambda_{t}(x)})(x),
$$
where $\bfe = \lbrace \sgn(\be_n^*(x))\rbrace$ and $\Lambda_{t} (x):= \{ n\in\N: \vert \be_n^*(x)\vert>t\}$ is the $t$-index set for $x$ associated to $T_t$. Since $\Lambda_{t}(x)$ is finite, $T_t \colon \X \to \X$ is well defined.

\begin{lemma}\cite[Lemma 2.3]{BBG}\label{bb1}
Let $x\in\X$ and $\bfe = \lbrace \sgn(\be_n^*(x))\rbrace$. For each $m$-greedy set $A$ of $x$,
	$$\min_{n\in A}\vert\be_n^*(x)\vert \Vert \bff_{\bfe A}\Vert \leq \tilde g_m \|x\|.$$
\end{lemma}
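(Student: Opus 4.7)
The plan is to apply the defining property of $\tilde g_m$ to a nested pair of greedy operators acting on the truncated vector $y := T_\alpha(x)$, where $\alpha := \min_{n \in A}|\be_n^*(x)|$.

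First I will assume $\alpha > 0$ (the opposite case is trivial) and set $y := T_\alpha(x) = (I - P_A)(x) + \alpha \bff_{\bfe A}$. Since $A$ is an $m$-greedy set for $x$, we have $|\be_n^*(x)| \ge \alpha$ on $A$ and $|\be_n^*(x)| \le \alpha$ on $A^c$; hence $|\be_n^*(y)| = \alpha$ on $A$ and $|\be_n^*(y)| \le \alpha$ on $A^c$, so $A$ is also an $m$-greedy set for $y$ with $P_A(y) = \alpha \bff_{\bfe A}$. I then pick $G \in \mathcal G_m$ that selects $A$ as its greedy set for the input $y$ (with an arbitrary extension to other inputs) and set $G' := 0 \in \mathcal G_0$; then $G' < G$ and $(G - G')(y) = \alpha \bff_{\bfe A}$, so by the definition of $\tilde g_m$,
\[
\alpha \|\bff_{\bfe A}\| = \|(G - G')(y)\| \le \tilde g_m \|y\|.
\]

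The main obstacle will be to establish $\|y\| \le \|x\|$, since the truncation $T_\alpha$ is not a contraction in a general M-basis. The strategy is to exploit the greedy structure of $A$: writing $x - y = P_A(x) - \alpha \bff_{\bfe A}$, which is supported on $A_1 := \{n \in A : |\be_n^*(x)| > \alpha\}$ with coefficients of magnitude $|\be_n^*(x)| - \alpha$ strictly smaller than $|\be_n^*(x)|$, and using that each initial segment of $A$ (ordered by decreasing magnitudes of $\be_n^*(x)$) is itself a greedy set for $x$, an Abel-summation argument combined with the semi-normalization of the basis should yield $\|y\| \le \|x\|$ and thus close the argument.
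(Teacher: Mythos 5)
Your reduction to the truncated vector $y=T_\alpha(x)$ breaks down at exactly the step you flag as the main obstacle: the inequality $\Vert T_\alpha(x)\Vert\le\Vert x\Vert$ is false for general Markushevich bases (even for monotone Schauder bases), and no Abel-summation or semi-normalization argument can rescue it. Concretely, in the space of Example~\ref{ex:igualdades en ctes} (the completion of $c_{00}$ under $\Vert (a_n)_n\Vert=\sup_{N}\vert\sum_{j=1}^N a_j\vert$), take $x=\sum_{j=1}^k(\be_{3j-2}+\be_{3j-1}-2\be_{3j})$ and $A=\{1\}\cup\{3j: 1\le j\le k\}$. Then $A$ is a greedy set of order $m=k+1$, $\alpha=\min_{n\in A}\vert\be_n^*(x)\vert=1$ and $\Vert x\Vert=2$, but $\Vert T_1(x)\Vert=k+1$. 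The best available substitute is Lemma~\ref{bb2}, $\Vert T_\alpha(x)\Vert\le g^c_{\vert\Lambda_\alpha(x)\vert}\Vert x\Vert\le g^c_{m-1}\Vert x\Vert$, which combined with your (correct) first step only yields the weaker bound $\alpha\Vert\bff_{\bfe A}\Vert\le \tilde g_m\, g^c_{m-1}\Vert x\Vert$. Note also that taking $G'=0$ in your first step only uses $\Vert G\Vert\le g_m\le\tilde g_m$ and never exploits a nontrivial nested pair; this is a sign that the truncation detour discards precisely the structure that $\tilde g_m$ is designed to capture.

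The proof in \cite{BBG} works on $x$ directly and uses nested greedy operators in an essential way. Order $A=\{n_1,\dots,n_m\}$ so that $a_1\ge a_2\ge\dots\ge a_m=\alpha$, where $a_j=\vert\be_{n_j}^*(x)\vert$ (assume $\alpha>0$, the other case being trivial). Each initial segment $A_j=\{n_1,\dots,n_j\}$ is a greedy set for $x$, so one can choose greedy operators $G^{(j)}\in\mathcal G_j$ with $G^{(j)}(x)=P_{A_j}(x)$ and $G^{(j)}<G^{(m)}$; setting $R_j:=(G^{(m)}-G^{(j-1)})(x)=\sum_{i= j}^{m}\be_{n_i}^*(x)\be_{n_i}$, one has $\Vert R_j\Vert\le\tilde g_m\Vert x\Vert$ for every $j$. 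Since $\e_{n_j}\be_{n_j}=a_j^{-1}\be_{n_j}^*(x)\be_{n_j}$, Abel summation gives
\begin{equation*}
\alpha\,\bff_{\bfe A}=\alpha\Bigl[\tfrac{1}{a_1}R_1+\sum_{j=2}^{m}\bigl(\tfrac{1}{a_j}-\tfrac{1}{a_{j-1}}\bigr)R_j\Bigr],
\end{equation*}
a combination with nonnegative coefficients whose sum telescopes to $\alpha/a_m=1$; the triangle inequality then yields $\alpha\Vert\bff_{\bfe A}\Vert\le\tilde g_m\Vert x\Vert$. Your instinct to Abel-summate along the decreasing rearrangement of the coefficients was the right one, but it must be used to produce this representation of $\alpha\bff_{\bfe A}$ itself, not to try to control the truncation operator.
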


With the notation above, we also have:

\begin{lemma}\cite[Lemma 2.5]{BBG}\label{bb2}
	For all $t>0$ and $x\in\X$, 
$$
	\Vert T_t(x)\Vert \leq g_{|\Lambda_{t} (x)|}^c\Vert x\Vert.
$$
Moreover, if  $\Lambda_{t} (x)=\emptyset$, the equality is attained as    
$T_t(x)=x$, $|\Lambda_{t} (x)|=0$ and $g_0^c=1$.
\end{lemma}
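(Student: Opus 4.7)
The plan is to realize $T_t(x)$ as a convex combination of elements of the form $(I-G_k)(x)$ with $G_k\in \mathcal G_k$ for $0\le k\le m:=|\Lambda_t(x)|$, and then apply the triangle inequality together with the monotone bound $\|I-G_k\|\le g_k^c\le g_m^c$.

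First I would set $A=\Lambda_t(x)$, $\alpha_n=\be_n^*(x)$, and rewrite the truncation in multiplier form $T_t(x)=\sum_{n}\tau_n\alpha_n\be_n$, where $\tau_n=t/|\alpha_n|\in(0,1)$ for $n\in A$ and $\tau_n=1$ otherwise. I then enumerate $A=\{n_1,\ldots,n_m\}$ so that $|\alpha_{n_1}|\ge\cdots\ge|\alpha_{n_m}|$, which makes $\tau_{n_1}\le\cdots\le\tau_{n_m}<1$ non-decreasing. For each $0\le k\le m$ the nested set $A_k:=\{n_1,\ldots,n_k\}$ collects the $k$ overall largest coefficients of $x$ (since $|\alpha_n|\le t<|\alpha_{n_m}|$ for $n\notin A$), so $A_k$ is a $k$-greedy set for $x$ and therefore $P_{A_k}(x)=G_k(x)$ for some $G_k\in \mathcal G_k$.

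The central step is the convex combination identity. I would try the weights $\lambda_0=\tau_{n_1}$, $\lambda_k=\tau_{n_{k+1}}-\tau_{n_k}$ for $1\le k<m$, and $\lambda_m=1-\tau_{n_m}$. Monotonicity of the $\tau_{n_k}$ together with $\tau_{n_m}<1$ ensures $\lambda_k\ge 0$, and telescoping gives $\sum_{k=0}^m\lambda_k=1$. A direct coordinate check then shows
\begin{equation*}
T_t(x)=\sum_{k=0}^m\lambda_k(I-P_{A_k})(x),
\end{equation*}
because for $n\notin A$ every summand contributes $\alpha_n\be_n$, while for $n_j\in A$ only the summands with $k<j$ contribute $\alpha_{n_j}\be_{n_j}$, and $\lambda_0+\cdots+\lambda_{j-1}$ telescopes to exactly $\tau_{n_j}$.

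Once this representation is in hand, the triangle inequality together with $\|(I-P_{A_k})(x)\|\le g_k^c\|x\|\le g_m^c\|x\|$ (the second inequality following from the nesting $\bigcup_{\ell\le k}\mathcal G_\ell\subset\bigcup_{\ell\le m}\mathcal G_\ell$) and $\sum_k\lambda_k=1$ yields $\|T_t(x)\|\le g_m^c\|x\|$. The boundary case $A=\emptyset$ is immediate since then $T_t(x)=x$ and $g_0^c=1$. The main obstacle I anticipate is spotting the correct weights; essentially one must recognize that the non-decreasing step function $k\mapsto\tau_{n_k}$ can be written as a distribution function so that each successive ``jump'' pays precisely for peeling off one more top coordinate. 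Once that ansatz is written down, the verification is mechanical.
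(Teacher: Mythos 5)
Your argument is correct: the weights $\lambda_k$ are nonnegative and sum to $1$, the coordinatewise telescoping check of $T_t(x)=\sum_{k=0}^m\lambda_k(I-P_{A_k})(x)$ is valid, each $A_k$ is indeed a $k$-greedy set for $x$, and the monotonicity $g_k^c\le g_m^c$ finishes the bound. The paper itself gives no proof (it cites \cite[Lemma 2.5]{BBG}), and your Abel-summation/convex-combination decomposition of the truncation into greedy tails is essentially the same argument used there, so nothing further is needed.
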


Given a subset $E\subset \X$, its convex hull is denoted by $\co(E)$. 
	\begin{lemma}\label{con}\cite[Lemma 2.7]{BBG}
		For every finite set $A\subset\N$, we have
		$$\co(\lbrace \bff_{\bfe A} : \bfe\in\Psi_A\rbrace) = \Big\{\sum_{n\in A}z_n \be_n : \vert z_n\vert\leq 1\Big\}.$$
\end{lemma}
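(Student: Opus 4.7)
The plan is to transfer the equality to the coordinate space $\mathbb{F}^A$ via the biorthogonal functionals, where it becomes a standard fact of finite-dimensional convex geometry. First I would note that $\Phi:\mathbb{F}^A\to\span\{\be_n:n\in A\}$, defined by $(z_n)_{n\in A}\mapsto \sum_{n\in A}z_n\be_n$, is a linear isomorphism with inverse $x\mapsto (\be_n^*(x))_{n\in A}$. Since a linear map preserves convex hulls, the lemma reduces to showing
$$
\co\bigl(\{(\e_n)_{n\in A}:\bfe\in\Psi_A\}\bigr)=\bigl\{(z_n)_{n\in A}:|z_n|\le 1 \text{ for all } n\in A\bigr\}
$$
inside $\mathbb{F}^A$. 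The inclusion $\subseteq$ is immediate, since the polydisc on the right is a convex set containing $\Psi_A$ by definition.

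For the reverse inclusion, I would decompose each coordinate separately and then combine the decompositions via a product expansion. Given $(z_n)_{n\in A}$ with $|z_n|\le 1$, for each $n\in A$ I would write $z_n=\sum_{j=1}^{J_n}\lambda_{n,j}\e_{n,j}$ as a finite convex combination of unimodular scalars $\e_{n,j}\in\mathbb{F}$: in the real case, $z_n=\tfrac{1+z_n}{2}(+1)+\tfrac{1-z_n}{2}(-1)$; in the complex case, writing $z_n=\cos(\theta_n)e^{i\phi_n}$ with $\theta_n\in [0,\pi/2]$ yields $z_n=\tfrac{1}{2}e^{i(\phi_n+\theta_n)}+\tfrac{1}{2}e^{i(\phi_n-\theta_n)}$. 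Then the product expansion
$$
(z_n)_{n\in A}=\sum_{\mathbf{j}=(j_n)_{n\in A}}\Big(\prod_{n\in A}\lambda_{n,j_n}\Big)(\e_{n,j_n})_{n\in A}
$$
realises the tuple as a convex combination of elements of $\Psi_A$: the coefficients are non-negative and sum to $\prod_{n\in A}\sum_{j_n}\lambda_{n,j_n}=1$, each tuple $(\e_{n,j_n})_{n\in A}$ lies in $\Psi_A$, and a direct check of any fixed coordinate $m\in A$ recovers $z_m$. Pulling this equality back through $\Phi$ produces the desired representation of $\sum_{n\in A}z_n\be_n$ as a convex combination of vectors $\bff_{\bfe A}$.

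The argument is essentially mechanical and presents no real obstacle; the only step requiring a moment's thought is the decomposition of a single complex number of modulus at most one into two scalars of modulus exactly one, which is a brief trigonometric identity. The product-of-convex-combinations trick then promotes the coordinate-wise expansions to a single convex combination indexed by $\Psi_A$, closing the argument.
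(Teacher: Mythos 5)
Your argument is correct and complete: the reduction to $\mathbb{F}^A$ via the coordinate isomorphism, the two-point unimodular decomposition of each scalar $z_n$ (in both the real and complex cases), and the product expansion that assembles the coordinatewise convex combinations into a single convex combination indexed by $\Psi_A$ all check out. Note that the paper itself offers no proof of this lemma---it is quoted verbatim from \cite[Lemma 2.7]{BBG}---so there is nothing to compare against here; your write-up is the standard self-contained argument that the convex hull of the torus $\Psi_A$ is the polydisc, and it would serve as a valid proof of the cited result.
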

\medskip

Now we focus on the task of giving estimates of the parameters $\mathbf{sc}_m$  and $\omega_m$ in terms of known parameters appearing in \cite{AW, AABW, BBG, DKOSZ}. Our estimates allow us to characterize bases which are PSLC (see Corollary~\ref{PSLC via estimates}).

\begin{proposition}\label{mejora}
	For each $m=1,2,\dots$, 
	$$\mathbf{c}_m\leq \mathbf{sc}_m \leq 4\nu^2\gamma_m\mathbf{c}_m,$$
	where $\nu=1$ or $2$ if $\mathbb F=\mathbb R$ or $\mathbb C$, respectively.
\end{proposition}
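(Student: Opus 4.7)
The first inequality $\mathbf{c}_m\le\mathbf{sc}_m$ is immediate: taking $\bfe\equiv\bfe'\equiv 1$ in the supremum defining $\mathbf{sc}_m$ recovers the supremum defining $\mathbf{c}_m$. For the second inequality I fix admissible $A,B,\bfe,\bfe'$ (so $|A|\le |B|\le m$, $A\le m$, $A<B$, $\bfe\in\Psi_A$, $\bfe'\in\Psi_B$) and plan to interpolate through the unsigned sum $\bff_B$, establishing
\begin{equation*}
\|\bff_{\bfe A}\|\le 2\nu\,\mathbf{c}_m\,\|\bff_B\|
\qquad\text{and}\qquad
\|\bff_B\|\le 2\nu\,\gamma_m\,\|\bff_{\bfe' B}\|.
\end{equation*}
Multiplying yields $\|\bff_{\bfe A}\|\le 4\nu^2\gamma_m\mathbf{c}_m\|\bff_{\bfe' B}\|$, which after taking suprema gives the claimed estimate for $\mathbf{sc}_m$.

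For the first of these two estimates I argue by scalar field. In the real case ($\nu=1$), I split $A=A^+\cupdot A^-$ according to the signs of $\bfe$; each $A^\pm\subset A$ inherits $|A^\pm|\le|B|$, $A^\pm<B$ and $\max A^\pm\le m$, so $\mathbf{c}_m$ bounds $\|\bff_{A^\pm}\|\le\mathbf{c}_m\|\bff_B\|$ on each piece and the triangle inequality gives the factor $2$. In the complex case ($\nu=2$), I write $\varepsilon_n=a_n+ib_n$ with $|a_n|,|b_n|\le 1$, so that $\bff_{\bfe A}=\sum_{n\in A}a_n\be_n+i\sum_{n\in A}b_n\be_n$; each real-coefficient sum lies in $\co\{\bff_{\eta A}:\eta\in\{\pm 1\}^A\}$ (the $\pm 1$-signs version of Lemma~\ref{con}, equivalent to the finite-dimensional identity $\co(\{\pm 1\}^A)=[-1,1]^A$), and the real-case estimate applied to each $\eta$ gives an extra factor of $2$, for an overall constant $4=2\nu$.

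The second estimate is proved symmetrically, using the identity $\be_n=\overline{\varepsilon_n'}\,\varepsilon_n'\be_n$ to reintroduce $\bfe'$. In the real case I partition $B=B^+\cupdot B^-$ by the signs of $\bfe'$, write $\bff_B=\bff_{\bfe' B^+}-\bff_{\bfe' B^-}$, and apply the definition of $\gamma_m$ (with $B^\pm\subset B$ and $\bfe'\in\Psi_B$) to obtain $\|\bff_{\bfe' B^\pm}\|\le\gamma_m\|\bff_{\bfe' B}\|$ and hence the factor $2\gamma_m$. In the complex case I expand $\overline{\varepsilon_n'}=c_n+id_n$, split $\bff_B=\sum_{n\in B} c_n\varepsilon_n'\be_n+i\sum_{n\in B} d_n\varepsilon_n'\be_n$, and use the same convex-hull device to realise each piece as an element of $\co\{\bff_{(\eta\bfe')B}:\eta\in\{\pm 1\}^B\}$; the real-case $\gamma_m$-argument then bounds $\|\bff_{(\eta\bfe')B}\|\le 2\gamma_m\|\bff_{\bfe' B}\|$ for each $\eta$, producing the overall constant $4\gamma_m=2\nu\gamma_m$. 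The one genuinely delicate point is the complex case, where the factor $\nu^2$ enters through two independent real/imaginary decompositions—one to strip the complex modulating signs off $\bfe$ and one to strip them off $\overline{\bfe'}$—each costing a factor $\nu=2$ via the convex-hull identity.
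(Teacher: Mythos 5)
Your argument is correct and follows essentially the same route as the paper: the lower bound is read off the definitions, and the upper bound is obtained by interpolating through $\Vert\bff_{B}\Vert$, splitting $A$ by signs to invoke $\mathbf{c}_m$ and reducing complex signs to real ones via real/imaginary parts and convexity. The only difference is that where the paper quotes \cite[Lemma 3.3]{BBG} for the step $\Vert\bff_{B}\Vert\le 2\nu\gamma_m\Vert\bff_{\bfe' B}\Vert$, you prove it directly by the symmetric sign-splitting argument on $B$, which makes the proof self-contained.
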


\begin{proof}
The lower bound for $ \mathbf{sc}_m$ is trivial by definition. To prove the upper bound, consider first that $\mathbb{F}=\mathbb{R}$ and take $A<B$\txcr{,} $A\le m$, $\vert A\vert\le \vert B\vert\le m$ and $\bfe\in\Psi_A$, $\bfe'\in\Psi_B$. Assume first that $\bfe \in\lbrace \pm 1\rbrace^{|A|}$. Then, if  $A^+=\lbrace n\in A : \e_n = 1\rbrace$ and $A^- = A\setminus A^+$, 
\begin{equation}\label{onda2}
\Vert \bff_{\bfe A}\Vert \le \Vert \bff_{A^+}\Vert + \Vert \bff_{A^-}\Vert\leq \mathbf{c}_m\Vert \bff_{B}\Vert + \mathbf{c}_m\Vert \bff_{B}\Vert=2\mathbf{c}_m\Vert \bff_{B}\Vert\le  4 \mathbf{c}_m\gamma_m \Vert \bff_{\bfe' B}\Vert,
\end{equation}
where the last inequality follows by \cite[Lemma 3.3]{BBG}.  For $\bfe \not \in\lbrace \pm 1\rbrace^{|A|}$, by convexity we have
$$
\max{\{\|\sum\limits_{n\in A}\Re{(\e_n)}\be_n\|, \|\sum\limits_{n\in A}\Im{(\e_n)}\be_n\|\}}\le \max_{\eta \in \lbrace \pm 1\rbrace^{|A|}} \|\bff_{\eta A}\|.
$$
Thus, by the triangle inequality we get $\|\bff_{\bfe A}\|\le 2 \|\bff_{\eta A}\|$ for some $\eta \in\lbrace \pm 1\rbrace^{|A|}$, and the result follows by \eqref{onda2}.
\end{proof}

\begin{Remark}
\rm 
Proposition~\ref{mejora} shows that the superconservative parameter can be controlled by the parameter $\gamma_m$ of Definition~\ref{C_u:unconditionall for cte coeff} and the conservative parameter. This implies that a basis that is conservative and unconditional for constant coefficients is superconservative. We do not know whether every superconservative basis is unconditional for constant coefficients, but as Example~\ref{examplenotcontrolled} shows, the parameter $\gamma_m$ cannot be controlled by $\mathbf{sc}_m$. 
\end{Remark}
\begin{example}\label{examplenotcontrolled}
	Let $\mathbb \X$ be the completion of $c_{00}$ under the norm
	$$
	\Vert (a_n)_n\Vert:= \sup_{D\in  \mathcal S}|\sum_{n\in D} a_n|,
	$$
	where 
	$$
\mathcal S:=\lbrace D\subset\mathbb N\colon  0<\vert D\vert \leq \sqrt{\min D}\text{, and } i<j<k, i\in D, k\in D \Longrightarrow j\in D \rbrace.
	$$
	There is no constant $K$ such that
	$$
	\gamma_m\le K \mathbf{sc}_m,\;\forall m\in \mathbb{N}.
	$$
\end{example}
\begin{proof}
	It is clear that the canonical basis $(\be_n)_n$ is a normalized Schauder basis for $\mathbb{X}$.\\
	Note that if $D\in \mathcal S$, then $D$ is a nonempty interval in $\mathbb{N}$, that is, either $D=\{m\}$ for some $m\in \mathbb{N}$, or $D=\{m,m+1, \dots,m+n\}$ for some $m,n\in \mathbb{N}$. Moreover, any nonempty interval $D'\subset D$ is also an element of $\mathcal S$. This guarantees that for every $x$ with finite support, 
	$$
	\|x\|=\max_{\substack{D \in \mathcal{S}\\\min \supp{(x)}\le D\le \max{\supp{(x)}} }}\Big\{|\sum\limits_{n=\min D}^{\max D}\be_n^{*}(x)|\Big\},
	$$
where $(\be_n^{*})_n$ are the biorthogonal functionals corresponding to $(\be_n)_n$.
In particular, the basis is monotone. Note also that, since $\lbrace n\rbrace\in\mathcal S$ for all $n\in\N$, for all $x\in\X$ we have
	$$
	\Vert x\Vert \geq \sup_{n\in \N}\vert\be_n^*(x)\vert=\Vert x\Vert_{\infty}.
	$$
	\\
	Now fix $m\in \mathbb N$,  a finite set \txcr{$A$} with $A\le m$, and $\bfe \in \Psi_A$. Then, for any $D\in \mathcal S$ such that $D\le \max{A}$, we have 
	$$
	|D|\le \sqrt{\min D}\le \sqrt{\max A}\le \sqrt{m}.
	$$
	Thus,
	$$
	\|\mathbf{1}_{\varepsilon A}\|\le \sup_{\substack{D\in \mathcal S\\D\le m}}\Big\{\sum_{n\in D} |\be_n^*(\mathbf{1}_{\bfe A})|\Big\}\le \sqrt{m}. 
	$$
	Given that for any finite set $B$ and any $\bfe' \in \Psi_B$, 
	$$
	1=\|\mathbf{1}_{\bfe' B}\|_{\infty}\le \|\mathbf{1}_{\bfe' B}\|,
	$$
	it follows that 
	$$
	\mathbf{sc}_m\le \sqrt{m}, \;\forall m\in \mathbb N. 
	$$
	Now fix $m\in \mathbb{N}$, and let 
	$$
	B\colon=\{4m^2+1,\dots, 4m^{2}+2m\},\qquad A\colon=\{n\in B: n\text{ is even}\}. 
	$$
	Clearly $|B|=2m$, $B\in \mathcal S$, and $|A|=m$. Now define $\bfe \in \Psi_B$ as follows 
	$$
\e_n\colon=(-1)^{n}\;\forall n\in B. 
	$$
Since $B$ is an interval, for any $D\in \mathcal S$ with $4m^{2}+1\le D\le 4m^2+2m$ we have
	$$
	|\sum\limits_{n=\min {D}}^{\max {D}}\be_n^*(\mathbf{1}_{\bfe B})|=|\sum\limits_{n=\min{D}}^{\max{D}}(-1)^{n}|\le 1=\|\mathbf{1}_{\bfe B}\|_{\infty}.
	$$
	Hence, 
	$$
	\|\mathbf{1}_{\bfe B}\|=1. 
	$$
On the other hand, since $B\in \mathcal S$ and $A\subset B$, 
	\begin{eqnarray*}
		\|\mathbf{1}_{\bfe A}\|& =\sup_{D\in \mathcal S} \big\{|\sum_{n\in D} \be_n^*(\mathbf{1}_{\bfe A})|\big\} \ge |\sum\limits_{n\in B}\be_n^*(\mathbf{1}_{\bfe A})|=|\sum\limits_{n\in A}\be_n^*(\mathbf{1}_{\bfe A})|=|\sum\limits_{n\in A}1|=m. 
	\end{eqnarray*}
	Thus,
	$$
	\gamma_{2m}\ge \frac{\|\mathbf{1}_{\varepsilon A}\|}{\|\mathbf{1}_{\varepsilon B}\|}=m,
	$$
	and the assertion is proved.
\end{proof}

\begin{proposition}\label{one4}
	For each $m=1,2,\dots$,
	\begin{eqnarray}
	\max\left\lbrace \frac{\mathbf q_m}{2},\, \mathbf{sc}_m\right\rbrace \leq \omega_m\leq 1+\mathbf q_m(1+\mathbf{sc}_m).
	\end{eqnarray}
\end{proposition}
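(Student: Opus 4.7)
The plan is to prove the two lower bounds and the upper bound separately.

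For $\mathbf{sc}_m\le \omega_m$, I would specialize the supremum defining $\omega_m$ by taking $x=0$ and $t=1$. With this choice, the conditions $|t|\ge \max_n|\be_n^*(x)|$ and $A<\supp(x)\cupdot B$ become trivial (or reduce to $A<B$), which together with $|A|\le|B|\le m$ and $A\le m$ are exactly the constraints defining $\mathbf{sc}_m$. The ratio under the sup reduces to $\|\ff_{\bfe A}\|/\|\ff_{\bfe' B}\|$, so taking the supremum recovers $\mathbf{sc}_m\le \omega_m$.

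For $\mathbf{q}_m/2\le \omega_m$, given any configuration $(x,A,\bfe)$ appearing in the definition of $\mathbf{q}_m$, I would apply $\omega_m$ with the first set taken to be empty, the second set equal to $A$, $\bfe':=\bfe$, $t:=1$, and the same $x$. The hypothesis $\max_n|\be_n^*(x)|\le 1$ gives $|t|\ge \max_n|\be_n^*(x)|$, and the remaining constraints of $\omega_m$ are either vacuous on the empty set or part of the $\mathbf{q}_m$ hypothesis. This produces $\|x\|\le \omega_m\|x+\ff_{\bfe A}\|$. Writing $\ff_{\bfe A}=(x+\ff_{\bfe A})-x$ and using the triangle inequality then yields $\|\ff_{\bfe A}\|\le (1+\omega_m)\|x+\ff_{\bfe A}\|$, so $\mathbf{q}_m\le 1+\omega_m$; since $\omega_m\ge 1$, this gives $\mathbf{q}_m\le 2\omega_m$.

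For the upper bound $\omega_m\le 1+\mathbf{q}_m(1+\mathbf{sc}_m)$, I would fix any admissible configuration $(A,B,x,t,\bfe,\bfe')$ in the definition of $\omega_m$ and start from the triangle inequality
\[
\|x+t\ff_{\bfe A}\| \le \|x+t\ff_{\bfe' B}\| + |t|\|\ff_{\bfe A}\| + |t|\|\ff_{\bfe' B}\|.
\]
The condition $A<\supp(x)\cupdot B$ encodes both $A<B$ (at least when $B\neq\emptyset$; otherwise $A=\emptyset$ and the estimate is trivial) and $\supp(x)\cap B=\emptyset$. The first of these, combined with $|A|\le |B|\le m$ and $A\le m$, allows me to apply $\mathbf{sc}_m$ to get $\|\ff_{\bfe A}\|\le \mathbf{sc}_m\|\ff_{\bfe' B}\|$. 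For the remaining term, the plan is to rescale: the vector $x/t$ satisfies $\max_n|\be_n^*(x/t)|\le 1$ and $\supp(x/t)\cap B=\emptyset$, so $\mathbf{q}_m$ applied to the triple $(x/t,B,\bfe')$ yields $\|\ff_{\bfe' B}\|\le (\mathbf{q}_m/|t|)\|x+t\ff_{\bfe' B}\|$. Substituting both bounds into the triangle inequality produces the claimed estimate.

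No step is a real obstacle; the only care needed is in verifying that the disjointness and ordering constraints transfer correctly between the three definitions, and in the rescaling by $|t|$ in the final step so that $\mathbf{q}_m$ can be invoked with the correct normalization on the coefficients of $x$.
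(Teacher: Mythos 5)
Your proposal is correct and follows essentially the same route as the paper: the lower bounds come from specializing the supremum defining $\omega_m$ (empty first set for the $\mathbf q_m$ bound, plus the triangle inequality and $\omega_m\ge 1$), and the upper bound comes from the same three-term triangle inequality with $\mathbf{sc}_m$ applied to the $A$-term and $\mathbf q_m$ (after dividing by $t$) applied to the $B$-term. Your extra care about the case $B=\emptyset$ and the rescaling by $|t|$ only makes explicit what the paper leaves implicit.
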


\begin{proof}
First we prove the lower bound.  It is clear that $\mathbf{sc}_m\leq \omega_m$. To prove that $\mathbf q_m \leq 2\omega_m$, take $A, x$ and  $\bfe$ as in the definition of $\mathbf q_m$. Then,
\begin{eqnarray*}\label{prob1}
\Vert \bff_{\bfe A}\Vert&\leq& \Vert x+\bff_{\bfe A}\Vert+\Vert x\Vert\leq \Vert x+\bff_{\bfe A}\Vert+\omega_m\Vert x+\bff_{\bfe A}\Vert\\
&\leq& 2\omega_m\Vert x+\bff_{\bfe A}\Vert.
\end{eqnarray*}
To prove the upper bound, let $A, B, x, t, \bfe$ and $\bfe'$ be as in the definition of $\omega_m$, that is, $A<\supp(x)\cupdot B$, $\vert A\vert\leq \vert B\vert\leq m$, $A\le m$, $t\geq\max_n\vert\be_n^*(x)\vert$ and $\bfe\in\Psi_A$, $\bfe'\in\Psi_B$. We have
	
\begin{eqnarray*}\label{prob2}
\Vert x+t\bff_{\bfe A}\Vert &\leq& \Vert x+t\bff_{\bfe' B}\Vert+\Vert t\bff_{\bfe A}\Vert+\Vert t\bff_{\bfe' B}\Vert\nonumber\\
&\leq& \Vert x+t\bff_{\bfe' B}\Vert+\mathbf{sc}_m\Vert t\bff_{\bfe' B}\Vert+\mathbf q_m\Vert x+t\bff_{\bfe' B}\Vert\\
&\leq& (1+\mathbf q_m + \mathbf q_m\mathbf{sc}_m)\Vert x+t\bff_{\bfe' B}\Vert,\nonumber
\end{eqnarray*}
which concludes the proof. 
\end{proof}

\begin{corollary}\label{PSLC via estimates}
An M-basis $\mathcal B$ in a Banach space $\X$ is partially symmetric for largest coefficients if and only if $\mathcal B$ is superconservative and quasi-greedy for largest coefficients.
\end{corollary}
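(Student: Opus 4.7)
The plan is to derive the corollary as an immediate consequence of the two-sided estimate in Proposition~\ref{one4}, namely
\[
\max\Bigl\{\frac{\mathbf q_m}{2},\, \mathbf{sc}_m\Bigr\} \leq \omega_m\leq 1+\mathbf q_m(1+\mathbf{sc}_m),
\]
which has already been proved. No further constructions are needed; the statement is a clean equivalence extracted from uniform boundedness of the three sequences.

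For the forward implication, I would assume $\mathcal B$ is $C_{pl}$-PSLC, i.e.\ $\sup_m \omega_m = C_{pl} < \infty$. The left-hand inequality in Proposition~\ref{one4} then yields simultaneously $\mathbf{sc}_m \le \omega_m \le C_{pl}$ and $\mathbf q_m \le 2\omega_m \le 2C_{pl}$ for every $m$. Taking suprema over $m$, one obtains $C_{sc} \le C_{pl}$ and $C_{ql}\le 2C_{pl}$, so $\mathcal B$ is superconservative and quasi-greedy for largest coefficients.

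For the converse, I would assume $\mathcal B$ is $C_{sc}$-superconservative and $C_{ql}$-quasi-greedy for largest coefficients, so that $\mathbf{sc}_m \le C_{sc}$ and $\mathbf q_m \le C_{ql}$ for all $m$. Plugging into the right-hand inequality of Proposition~\ref{one4} gives
\[
\omega_m \le 1+\mathbf q_m(1+\mathbf{sc}_m) \le 1 + C_{ql}(1+C_{sc}),
\]
uniformly in $m$, whence $C_{pl} = \sup_m \omega_m < \infty$ and $\mathcal B$ is PSLC.

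The main (and only) obstacle would have been if Proposition~\ref{one4} did not cover both directions with the same quantitative flavor, but since it does, the corollary reduces to inserting the defining suprema on both sides and reading off the conclusion; no additional combinatorial or geometric argument is required.
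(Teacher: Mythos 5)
Your proof is correct and follows exactly the same route as the paper's: both directions are read off from the two-sided estimate in Proposition~\ref{one4} by taking suprema, with the same constants $C_{sc}\le C_{pl}$, $C_{ql}\le 2C_{pl}$, and $C_{pl}\le 1+C_{ql}(1+C_{sc})$.
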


\begin{proof}
The lower bound of Proposition~\ref{one4} shows that a basis that is $C_{pl}$-PSLC is also superconservative with constant $C_{sc}\le C_{pl}$ and quasi-greedy for largest coefficients with constant $C_{ql} \le 2  C_{pl}$.
On the other hand, if the basis is $C_{sc}$-superconservative and $C_{ql}$-quasi-greedy for largest coefficients, applying the upper bound of Proposition~\ref{one4} we obtain 
$$C_{pl}=\sup_m \omega_m \leq 1+C_{ql}(1+C_{sc}).$$
Hence, the basis is PSLC.
\end{proof}

\section{Main results for $\widehat{\mathbf L}_m$}\label{main}
In this section, we prove the main results concerning $\widehat{\mathbf L}_m$. 
\begin{proof}[Proof of Proposition \ref{triv}:]
The proof of this result is immediate using 
that by definition
$$
\widehat{\mathbf L}_m \leq \widetilde{\mathbf L}_m, 
$$
and also that $\widetilde{\mathbf L}_m \leq 1+2\kappa m$, by~\cite[Theorem 1.8]{BBG}.
\end{proof}

\begin{proof}[Proof of Theorem \ref{one}:]
Take $x\in\mathbb X$, fix $m$ and take a greedy operator of order $m$, $G_m$.  Set $A:=\supp (G_m(x))$, take $k\leq m$ and define $D:=\{1,\dots,k\}$. We have the following 
possibly trivial decomposition:
	
\begin{eqnarray*}
	x-G_m(x)=P_{(A\cup D)^c}(x)+P_{D\setminus A}(x).
\end{eqnarray*}
	
\noindent On the one hand, since 
$$
P_{(A\cup D)^c}(x) = (I-P_{A\setminus D})(x-P_k(x)),
$$
we have 
\begin{eqnarray}\label{rr3}
\Vert P_{(A\cup D)^c}(x)\Vert \leq g_{m}^c \Vert x-P_k(x)\Vert.
\end{eqnarray}
	
\noindent On the other hand, notice that $D\setminus A \le k < A\setminus D$ and  $ A\setminus D$ is a greedy set for $x-P_k(x)$. Thus,  applying first Lemma \ref{con} and then Lemma \ref{bb1} with $\bfe=(\sgn(\be_n^*(x)))_n$, we obtain
\begin{eqnarray}\label{rr4}
\nonumber\Vert P_{D\setminus A}(x)\Vert &\leq& \mathbf{sc}_m \max_{n\in D\setminus A}\vert \be_n^*(x)\vert\Vert \bff_{\bfe (A\setminus D)}\Vert\\
\nonumber &\leq& \mathbf{sc}_m \min_{n\in A\setminus D}\vert \be_n^*(x-P_k(x))\vert\Vert \bff_{\bfe (A\setminus D)}\Vert\\
&\leq& \tilde g_{|A\setminus D|}\mathbf{sc}_m \Vert x-P_k(x)\Vert\\
\nonumber &\leq& \tilde g_m\mathbf{sc}_m \Vert x-P_k(x)\Vert.
\end{eqnarray}
As \eqref{rr3} and \eqref{rr4} hold for any $k\le m$,  a direct combination of both inequalities shows that $\|x-G_m(x)\|\le ( g_{m}^c+ \tilde g_m\mathbf{sc}_m)\widehat{\sigma}_m(x)$ and therefore, the result follows.
\end{proof}
\smallskip

\begin{corollary}\label{cor1}
If an M-basis is $C_q$-quasi-greedy, then for each $m=1,2,\dots$,
$$\widehat{\mathbf L}_m \leq C_q + 2C_q\mathbf{sc}_m.$$
\end{corollary}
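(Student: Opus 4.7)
The plan is to obtain this corollary as a direct consequence of Theorem~\ref{one} combined with the definition of quasi-greediness and the inequalities collected in Remark~\ref{inequalities}. By Theorem~\ref{one}, for every $m$ we already have the bound
$$
\widehat{\mathbf L}_m \leq g_m^c + \tilde g_m\,\mathbf{sc}_m,
$$
so it remains only to estimate $g_m^c$ and $\tilde g_m$ under the hypothesis that $\mathcal B$ is $C_q$-quasi-greedy.

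First, from Definition~\ref{definitionquasigreedy} the assumption $C_q=\sup_m g_m^c<\infty$ gives immediately $g_m^c \leq C_q$ for each $m$. Next, invoking Remark~\ref{inequalities}(ii), $\tilde g_m \leq 2\min\{g_m, g_m^c\} \leq 2 g_m^c \leq 2C_q$. Substituting both estimates into the inequality from Theorem~\ref{one} yields
$$
\widehat{\mathbf L}_m \leq C_q + 2C_q\,\mathbf{sc}_m,
$$
as desired.

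There is no real obstacle here: the corollary is essentially a bookkeeping step, showing how the general bound specializes under the quasi-greedy hypothesis. The only thing to be careful about is to use the sharper inequality $\tilde g_m \leq 2 g_m^c$ from Remark~\ref{inequalities}(ii) rather than $\tilde g_m \leq g_m g_m^c$, since the latter would give a worse constant involving $g_m$ (which need not be bounded by $C_q$ directly, only by $C_q+1$ via Remark~\ref{inequalities}(i)).
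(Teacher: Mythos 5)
Your proof is correct and matches the paper's argument exactly: both apply Theorem~\ref{one} and then use $g_m^c\le C_q$ together with $\tilde g_m\le 2g_m^c\le 2C_q$ (the latter via Remark~\ref{inequalities}(ii)). Your extra remark about avoiding the bound $\tilde g_m\le g_m g_m^c$ is a reasonable clarification but does not change the substance.
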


\begin{proof}
	Just apply Theorem~\ref{one} and use the estimate $\tilde g_m\le 2C_q$ for all $m\in\N$. 
\end{proof}
\smallskip

\begin{proof}[Proof of Proposition~\ref{propositionnewdef}:]
Take $A, B, x, \bfe, \bfe'$ as in the definition of $\omega_m$.  A careful look at this definition allows us to only consider $t>0$. Now, let $m_1:=\max A$. Since $|A|\le |B|\le m$, $A\le m_1\le m$ and $A<B$, there  
exists a possibly empty set $D$ such that $D\subset \{1,\dots, m_1\}\setminus A$ 
and $m_1\le \vert D\cup B\vert\le m$. Let $m_2:=\vert D\cup B\vert$ and define, for $\eta>0$, the element 
$$
y:= x+t\bff_{\bfe A}+(t+\eta)(\bff_{\bfe' B}+\bff_{ D}).
$$

\noindent As in particular $A\cap \supp(x)$ is the empty set and $t\ge\max_n |\be^*_n(x)|$, we see that $G_{m_2}(y)= (t+\eta)(\bff_{\bfe' B}+\bff_{ D})$ and $P_{m_1}(y)=t\bff_{\bfe A}+(t+\eta)\bff_{ D}$, then

$$
\Vert x+t\bff_{\bfe A}\Vert = \Vert y-G_{m_2}(y)\Vert \leq \widehat{\mathbf L}_{m_2}\Vert y-P_{m_1}(y)\Vert \le 
\max_{1\le k\le m}\widehat{\mathbf L}_k\Vert x+(t+\eta)\bff_{\bfe' B}\Vert.
$$

\noindent Since the inequality holds for any $\eta>0$, we conclude that $
\omega_m\le \max_{1\le k\le m}\widehat{\mathbf L}_k.$
\end{proof}
\medskip

The following lemma will be used to prove Theorem \ref{one2y3}. It is   based on \cite[Lemma~2.8]{BBG}, but stated for the parameter $\omega_m$ instead of $\nu_m$. We give a proof for the sake of completeness. \\
\begin{lemma}\label{pl}
Let $x\in\X$ and $|t|\geq \max_{n}\vert\be_n^*(x)\vert$. Then,
$$
\Vert x+z\Vert \leq \omega_m \Vert x+t\bff_{\bfe B}\Vert,
$$
for any finite set $B$, $\vert B\vert\le m$,  any $\bfe\in\Psi_B$, and any $z$ such that $\supp(z)\leq m$, $\supp{(z)}< B\cupdot\supp(x)$ and $|t|\ge\max_n\vert\be_n^*(z) \vert$.
\end{lemma}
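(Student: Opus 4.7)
The plan is to reduce the inequality to the defining property of $\omega_m$ via a convex combination argument based on Lemma~\ref{con}. Specifically, I will realise $z$ as a finite convex combination of scalar multiples of signed indicators $t\bff_{\bfe_j A}$ supported on $A:=\supp(z)$, apply the triangle inequality, and then bound each summand by the definition of $\omega_m$.

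First I set $A:=\supp(z)$. Since $|\be_n^*(z)/t|\le 1$ for every $n\in A$, the element $z/t=\sum_{n\in A}\tfrac{\be_n^*(z)}{t}\be_n$ lies in the set $\{\sum_{n\in A}c_n\be_n:|c_n|\le 1\}$, which by Lemma~\ref{con} equals $\co\{\bff_{\bfe A}:\bfe\in\Psi_A\}$. By Carath\'eodory's theorem (applied in the finite-dimensional span of $\{\be_n:n\in A\}$), there exist finitely many $\lambda_j\ge 0$ with $\sum_j\lambda_j=1$ and signs $\bfe_j\in\Psi_A$ such that
$$
z=t\sum_{j}\lambda_j\bff_{\bfe_j A}.
$$
Then $x+z=\sum_j\lambda_j(x+t\bff_{\bfe_j A})$, and the triangle inequality yields
$$
\|x+z\|\le\sum_j\lambda_j\|x+t\bff_{\bfe_j A}\|.
$$

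Next, for each $j$ I invoke the definition of $\omega_m$ with the pair $(A,B)$ and the element $x$. The required conditions are all in place: the cardinality compatibility $|A|\le|B|\le m$ (the intended regime in which the lemma is used), $A\le m$ as a max-index condition (direct consequence of $\supp(z)\le m$), $A<\supp(x)\cupdot B$ (direct hypothesis), $\bfe_j\in\Psi_A$ and $\bfe\in\Psi_B$, and $|t|\ge\max_n|\be_n^*(x)|$. Consequently,
$$
\|x+t\bff_{\bfe_j A}\|\le\omega_m\|x+t\bff_{\bfe B}\|
$$
for every $j$. Substituting into the previous inequality and using $\sum_j\lambda_j=1$ yields the desired bound $\|x+z\|\le\omega_m\|x+t\bff_{\bfe B}\|$.

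The only step requiring any care is the convex decomposition itself: one must check that the bound $|\be_n^*(z)|\le|t|$ translates, after normalisation by $t$, into membership in the correct convex hull. This is exactly the content of Lemma~\ref{con} and is the reason that lemma was recalled. With it in hand, the rest is bookkeeping around the definition of $\omega_m$.
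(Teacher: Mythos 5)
Your proof is correct and follows essentially the same route as the paper's: both reduce the claim to the defining inequality for $\omega_m$ by writing $z/t$ as a convex combination of signed indicators $\bff_{\bfe_j A}$ via Lemma~\ref{con} and then using convexity of the norm termwise. Your explicit flag that the cardinality condition $|\supp(z)|\le |B|$ must be assumed (it is not literally among the stated hypotheses, but holds in the lemma's application) is accurate --- the paper's proof makes the same tacit assumption --- and the appeal to Carath\'eodory is harmless but unnecessary, since elements of a convex hull are by definition finite convex combinations.
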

\begin{proof}
Notice that by the definition of $t\bff_{\bfe B}$ with  $\bfe \in\Psi_B$, it is enough to give a proof for $t>0$. By the definition of the parameter $\omega_m$, the result is true if we take $z=\bff_{\tilde \bfe A}$, for any $\tilde\bfe\in\Psi_A$, $A< \supp(x)\cupdot B $, $A\le m$, and $\vert A\vert\leq\vert B\vert\leq m$. Thanks to the convexity of the norm, it continues to be true for any element $z\in \co\left(\lbrace \bff_{\tilde \bfe A}: \tilde \bfe\in\Psi_A\rbrace\right)$. Then, the general case follows from Lemma \ref{con}.
\end{proof}

\begin{proof}[Proof of Theorem~\ref{one2y3}:]
Taking $k=0$ in the definition of $\widehat{\sigma}_m(x)$, we see that $\widehat \sigma_m(x)\le \|x\|$ for all $x$. Then, the inequality $g_m^c \leq \widehat{\mathbf L}_m$ is immediate for all $m\in \N$.\\
To prove the upper bound, fix $m\in \N$,  $x\in\mathbb X$ and a greedy operator of order $m$, $G_m$. We will show that  $\| x - G_m(x)\| \le g_{m-1}^c \omega_m\| x - P_k(x)\|$ for all $k\le m$.\\
Set $A:=\supp G_m(x)$, fix  $k\leq m$ and take $D:=\{1,\dots,k\}$.  If $A=D$, then $k=m$ and $G_m(x)=P_m(x)$; as $g_{m-1}^c, \omega_m\ge 1$ there is nothing to prove. If $A\not=D$, consider the following decomposition:

\begin{eqnarray}\label{rr5a}
x-G_m(x)=P_{(A\cup D)^c}(x-P_k(x))+P_{D\setminus A}(x).
\end{eqnarray}

\noindent Applying  Lemma \ref{pl} with  $t=\min_{n\in A}\vert \be_n^*(x)\vert$\txcr{,}  $z= P_{D\setminus A}(x)$, and $\bfe\in \Psi_{A\setminus D}$ such that $\varepsilon_n=\sgn(\be_n^*(x))$, we have
	\begin{eqnarray}\label{plone5a}
	\Vert P_{(A\cup D)^c}(x-P_k(x))+P_{D\setminus A}(x)\Vert\leq \omega_m \Vert P_{(A\cup D)^c}(x-P_k(x))+t\bff_{\bfe A\setminus D}\Vert. 
	\end{eqnarray}

\noindent Let $T_t$ be the $t$-truncation operator and $\Lambda_{t}$ be its associated index set. Notice that $|\Lambda_{t}(x-P_k(x))|\le m-1$ and 
$$
P_{(A\cup D)^c}(x-P_k(x))+t\bff_{\bfe A\setminus D} = T_t(x-P_k(x)).
$$
Thus, an application of Lemma \ref{bb2}  yields
	\begin{eqnarray}\label{rtwo5a}
	\Vert P_{(A\cup D)^c}(x-P_k(x))+t\bff_{\bfe A\setminus D}\Vert \le g_{|\Lambda_{t}(x-P_k(x))|}^{c} \Vert x-P_k(x)\Vert\le  g_{m-1}^c\Vert x-P_k(x)\Vert.
\end{eqnarray}

\noindent Hence, the upper bound in the statement follows combining  \eqref{rr5a},  \eqref{plone5a} and \eqref{rtwo5a}. 
Finally, by Proposition~\ref{propositionnewdef} we have $\omega_1\le \widehat{\mathbf{L}}_1$, which completes the proof.
\end{proof}

{\begin{Remark}\rm 
Note that the sequence $(g_m^{c})_m$ is increasing, so from Theorem~\ref{one2y3} we also get, \txcr{for all $m\in \N$,}
$$
g_m^{c}\le \widehat{\mathbf L}_m\le g_m^{c}\,\omega_m.
$$
\end{Remark}

We end this section presenting two examples that allow us to study the optimality of the inequalities of the main results about $\widehat{\mathbf L}_m$. First note that if $\mathcal B$ is the unit vector basis of $\mathtt{c}_0$ or $\ell_p$ with $1\le p<\infty$, we have $g_m^{c}=1$ and $\omega_m=1$ for all $m\in \N$. Hence, equality holds throughout in Theorem~\ref{one2y3}. With the next example  we show that equality holds in Propositions~\ref{propositionnewdef} and~\ref{triv}.

\begin{example}\label{ex:igualdades en ctes}
Let $\mathbb{X}$ be the completion of $\mathtt{c}_{00}$ under the norm
$$
\Vert \mathbf (a_n)_n\Vert:= \sup_{n\geq 1}\Big\vert \sum_{j=1}^n a_j\Big\vert.
$$

The canonical  basis of $\mathbb{X}$, $(\be_n)_n$, is monotone and the following hold for all $m\in \mathbb N$:
\begin{enumerate}[\upshape (i)]
\item $g_m= \tilde g_m= 2m$ and $g_m^c = 2m+1$.
\item $\mathbf{sc}_m = m$ and $\mathbf{c}_m=1$.
\item $\omega_m = \widehat{\mathbf L}_m =\max_{1\le k\le m}\widehat{\mathbf{L}}_k= 1+4m$.
\end{enumerate}
Then, equality holds in Propositions~\ref{triv} and~\ref{propositionnewdef}.
\end{example}

\begin{proof}
A proof of (i) can be found in \cite[Proposition 5.1]{BBG}. To see (ii), notice that for every finite set $A$,
$$
\Vert \bff_A\Vert = \vert A\vert.
$$
Hence, $\mathbf{c}_m = 1$. Also, notice that for $\bfe\in\Psi_A$ we have
$$
1\leq \Vert \bff_{\bfe A}\Vert \leq \vert A\vert.
$$
Then, for $A, B, \bfe$, and $\bfe'$ as in the definition of  $\mathbf{sc}_m$, we have 
$$
\frac{\Vert \bff_{\bfe A}\Vert}{\Vert \bff_{\bfe' B}\Vert} \leq \vert A\vert\le m.
$$
Finally with $A:=\{1,\ldots,m\}$, $B:=\{m+1,\ldots, 2m\}$,  $\bfe\equiv 1$ and $\bfe'=((-1)^n)_n$, and noting that $\Vert \sum_{n=1}^m (-1)^n \be_n\Vert = 1$, we see that the bound above is attained and $\mathbf{sc}_m = m$.
\medskip

\noindent Next, let us show that (iii) holds. Taking into account that $\Vert \be_n\Vert=1$ for all $n\in\N$, $\Vert \be_1^*\Vert=1$ and $\Vert \be_n^*\Vert=2$ for all $n\geq 2$, the constant  in Proposition~\ref{triv} is $\kappa=2$, so we obtain that $\widehat{\mathbf L}_k \leq 1+4k$ for any $k$. Then, clearly, $\d\max_{1\le k\le m}\widehat{\mathbf{L}}_k\le 1+4m$. \\
To show the lower bound, consider the sets $A$ and $B$ with $|A|=|B|=m$ so that 
$$
\begin{array}{rl}
\bone_A:= &  \big(\underbrace{1,\dots,1}_{m},0,0,\ldots),\\
\bone_B:= & \big(\underbrace{0,\dots,0}_{m},\underbrace{0,1,0}\;,\;\ldots\;,\;\underbrace{0,1,0}\;, \;0,\ldots\big),
\end{array}
$$
and define $x\in \X$ with $|\supp(x)|=2m+1$  as follows
$$
\begin{array}{rl}
x:= &  \big(\underbrace{0,\dots,0}_{m},\underbrace{\overbrace{\tfrac12,0,\tfrac12}\;,\;\ldots\;,\;\overbrace{\tfrac12,0,\tfrac12}\;, \;\tfrac12}_{\supp(x)},0,0,\ldots\big).
\end{array}
$$
Then $  \Vert x+\bff_A\Vert= 2m +\frac 12$ and  $\Vert x-\bff_{B}\Vert=\frac 12$. Now, using Proposition~\ref{propositionnewdef} we get 
$$
\max_{1\le k\le m}\widehat{\mathbf{L}}_k\geq \omega_m \geq \dfrac{\Vert x+\bff_A\Vert}{\Vert x-\bff_{B}\Vert}=1+4m,
$$
and the proof is complete.
\end{proof}

Next, we prove the optimality of the estimate of Theorem~\ref{one} and the right-hand side of the inequality in Theorem~\ref{one2y3}. Additionally, we give another example in which equality in Proposition~\ref{propositionnewdef} holds. 

\begin{example}\label{exell1c0} Let $(\be_n)_n$ and $(\mathbf{f}_n)_{n}$ be the unit vector bases of $\ell_1$ and $\mathtt{c}_0$ respectively, and let $\mathbb{X}$ be the space $\ell_1\times \mathtt{c}_0$ with the norm 
$$
\|(x,y)\|:=\max\{\|x\|_{1},\|y\|_{\infty}\}.
$$
For each $m\in \N$, define 
$$
\mathbf{x}_{2m-1}:=(\be_m,0),\qquad \mathbf{x}_{2m}:=(0,\mathbf{f}_m).
$$
Then, $(\mathbf{x}_n)_n$ is a Schauder basis for $\mathbb{X}$, and the following hold for all $m\in \N$:
\begin{enumerate}[\upshape (i)]
\item \label{quasigreedy} $g_m= \tilde g_m= g_m^c = 1$.
\item \label{conservativeness} $\mathbf{sc}_{2m}=\mathbf{c}_{2m}=\mathbf{sc}_{2m-1}=\mathbf{c}_{2m-1}=m$.
\item \label{omegaex}$\omega_{2m}=\omega_{2m-1}=\widehat{\mathbf{L}}_{2m-1}=\widehat{\mathbf{L}}_{2m}=m+1.$ 
\end{enumerate}
Thus, equalities in Theorem~\ref{one} and Proposition~\ref{propositionnewdef} hold, and also the right-hand side of the inequality in Theorem~\ref{one2y3}. 
\end{example}

\begin{proof}
It is easy to check that $(\mathbf{x}_n)_n$ is a $1$-unconditional  basis for $\X$, so \eqref{quasigreedy} holds trivially. To prove \eqref{conservativeness}, fix $m\in \N$ and  take any nonempty set $A$, with $A\le 2m$  and $\bfe \in \Psi_A$. Define $A_e:=\{j\in A: j\text{ is even}\}$ and $A_o:=A\setminus A_e$. We have
$$
\|\bff_{\bfe A}\|=\max\{\|\bff_{\bfe A_o}\|_{1}, \|\bff_{\bfe A_e}\|_{\infty}\}\le \max\{|A_o|,1\}\le m. 
$$
It follows that 
\begin{equation}
\mathbf{sc}_{2m}\le m.\label{upperscm}
\end{equation}
Now, define the sets
$$
A_m:=\{2j-1\colon 1\le j\le m\},\qquad B_m:=\{2m+ 2j\colon 1\le j\le m\}. 
$$
Note that for all $m$, 
\begin{equation*}
|A_m|=|B_m|=m,\quad A_m \le 2m -1 <2m+2\le B_m,\quad \|\mathbf{1}_{A_m}\|=m, \quad \|\mathbf{1}_{B_m}\|=1. \label{somebounds}
\end{equation*}
In particular, this immediately gives 
\begin{equation}
\mathbf{c}_{2m-1}\ge m. \label{lowercm}
\end{equation}
Since $\mathbf{c}_{n}\le \min{\{\mathbf{c}_{n+1},\mathbf{sc}_{n}\}}\le \mathbf{sc}_{n+1}$ for all $n\in \N$, combining \eqref{upperscm} and \eqref{lowercm} we obtain \eqref{conservativeness}.  Finally, to prove \eqref{omegaex} first notice that by \eqref{quasigreedy}, \eqref{conservativeness} and Theorem~\ref{one}, for all $m$ it follows that
\begin{equation}
\widehat{\mathbf{L}}_{2m-1}\le m+1,\qquad \widehat{\mathbf{L}}_{2m}\le m+1.\label{upperstrongpartially}
\end{equation}
Given that $(\omega_n)_n$ is an increasing sequence, \eqref{upperstrongpartially} and Proposition~\ref{propositionnewdef} yield  for all $m\in \N$,
\begin{equation}\label{upperomega}
\omega_{2m-1}\le \omega_{2m}\le \max_{1\le k\le 2m}\widehat{\mathbf{L}}_{k}\le m+1. 
\end{equation}
Considering $A_m$ and $B_m$ as before, with $\bfe'\in \Psi_{B_m}$ we have 
\begin{equation}
\|\bff_{A_m}+\mathbf{x}_{2m+1}\|=\|\sum\limits_{j=1}^{m+1}(\be_j,0)\|=m+1,\label{inell1}
\end{equation}
whereas
$$
\|\bff_{\bfe' B_m}+\mathbf{x}_{2m+1}\|=\max\{\|\be_{m+1}\|_{1}, \|\sum\limits_{j=1}^{m}\e'_{2m+2j} \mathbf{f}_{m+j}\|_{\infty}\}=1.
$$
Thus, by the choice of $A_m$ and $B_m$, it follows that for all $m\in \N$,  
\begin{equation}
\omega_{2m-1}\ge m+1. \label{omega+strongpartially}
\end{equation}
Now take $\eta>0$, and let 
$$
y_m:=\bff_{A_m}+\mathbf{x}_{2m+1}+(1+\eta)(\bff_{B_m}+\bff_{B_{2m}}).
$$
By \eqref{inell1}, we have 
$$
\|y_m-G_{2m}(y_m)\|=\|\bff_{A_m}+\mathbf{x}_{2m+1}\|=m+1, 
$$
and by the $1$-unconditionality of the basis, 
$$
\|y_m-G_{2m-1}(y_m)\|\ge \|y_m-G_{2m}(y_m)\|=m+1. 
$$
Since 
\begin{equation}
\|y_m-P_{2m-1}(y_m)\|=\|\mathbf{x}_{2m+1}+(1+\eta)(\bff_{B_m}+\bff_{B_{2m}})\|=(1+\eta) 
\end{equation}
and $\eta$ is arbitrary, it follows that
$$
\widehat{\mathbf{L}}_{2m}\ge m+1,\qquad \widehat{\mathbf{L}}_{2m-1}\ge m+1. 
$$
Combining these inequalities with \eqref{upperstrongpartially}, \eqref{upperomega} and  \eqref{omega+strongpartially}, we obtain \eqref{omegaex}. 
\end{proof}

\section{Characterization of $1$-strong partially greedy bases}\label{onep}

Since 2006, some authors have studied bases that have their respective greedy constants  attaining the least possible value, that is bases that have constant one.  For instance, F. Albiac and P. Wojtaszczyk \cite{AW} gave a characterization of $1$-greedy bases and F. Albiac and J. L. Ansorena \cite{AA1} characterized $1$-almost-greediness in terms of $1$-symmetry for largest coefficients  (see the paragraph below Remark~\ref{remarkoriginalpslc} for its definition). Following this spirit, in this section we prove a similar result for $1$-strong partially greedy and $1$-PSLC bases showing that this last condition is stronger than quasi-greediness. 

\begin{theorem}\label{equiv}
An M-basis $\mathcal B$ is $1$-strong partially greedy if and only if $\mathcal B$  is $1$-PSLC.
\end{theorem}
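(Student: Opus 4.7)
The proof splits into the two implications of the equivalence, with one being routine and the other encapsulating the whole novelty of the statement.

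For the easy direction $1$-SPG $\Rightarrow$ $1$-PSLC, I would appeal directly to Proposition~\ref{propositionnewdef}. If $\widehat{\mathbf L}_k\le 1$ for every $k$, then $\omega_m\le \max_{1\le k\le m}\widehat{\mathbf L}_k\le 1$, and since $\omega_m\ge 1$ always (take $A=B=\emptyset$ in Definition~\ref{definitionpslc}), one obtains $C_{pl}=\sup_m\omega_m=1$.

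For the converse $1$-PSLC $\Rightarrow$ $1$-SPG, the goal is $\|x-G_m(x)\|\le\|x-P_k(x)\|$ for all $x\in\X$, $G_m\in\mathcal G_m$, and $k\le m$. I would mimic the proof of Theorem~\ref{one2y3}, setting $A:=\supp G_m(x)$, $D:=\{1,\dots,k\}$, $y:=P_{(A\cup D)^c}(x)$, $A':=D\setminus A$, $B:=A\setminus D$, so that
$$x-G_m(x)=y+P_{A'}(x),\qquad x-P_k(x)=y+P_B(x),$$
with $|A'|\le|B|\le m$, $A'\le k$, and $A'<\supp(y)\cupdot B$. Taking $t=\min_{n\in B}|\be_n^*(x)|$ (the trivial case $B=\emptyset$ is handled separately) and $\bfe=(\sgn(\be_n^*(x)))_{n\in B}$, Lemma~\ref{pl} with $\omega_m=1$ yields
$$\|x-G_m(x)\|=\|y+P_{A'}(x)\|\le\|y+t\bff_{\bfe B}\|=\|T_t(x-P_k(x))\|.$$

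Everything up to this point is a direct transcription of the argument that proved Theorem~\ref{one2y3}, with the constant $\omega_m$ replaced by $1$. The remaining inequality is $\|T_t(x-P_k(x))\|\le \|x-P_k(x)\|$, i.e.\ that the $t$-truncation is a contraction; invoking Lemma~\ref{bb2} only yields the factor $g_{m-1}^c$, which is exactly what the theorem asserts must be dispensed with (see the remark after Theorem~\ref{two}: PSLC with $C_{pl}=1$ must force $C_q=1$). Proving this truncation contraction directly from $1$-PSLC is the technical core.

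My plan for this core step is induction on $|\Lambda_t(z)|$, where $z:=x-P_k(x)$. The base case $|\Lambda_t(z)|=0$ is trivial. For the inductive step, choose $n^*\in\Lambda_t(z)$, write $c:=\be_{n^*}^*(z)$, and set $z':=z-(|c|-t)\sgn(c)\be_{n^*}$, so that $|\Lambda_t(z')|=|\Lambda_t(z)|-1$ and $T_t(z')=T_t(z)$; if $\|z'\|\le\|z\|$ can be established, the induction closes. The main obstacle is then this single-coordinate reduction. To attack it, I would exploit that $\supp(z)\subseteq\{k+1,\dots\}$ so that the whole interval $\{1,\dots,k\}$ is available as an anchor: introduce an auxiliary element of magnitude $\ge|c|$ at some index $n_0\le k$, use one $1$-PSLC application to compare this anchor with the contribution at $n^*$ (where the hypothesis $A<\supp(x)\cupdot B$ is exactly what allows the swap since $n_0<n^*$), and then a second $1$-PSLC application to return to the original support. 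The sharp hypothesis $\omega_m=1$, not merely bounded $\omega_m$, is essential so that the two swaps compose without accumulating slack; producing precisely the desired coefficient reduction at $n^*$ with no residual perturbation elsewhere is the delicate point. Once $\|z'\|\le\|z\|$ is in place, the induction gives $\|T_t(z)\|\le\|z\|$, and combined with the Lemma~\ref{pl} step yields $\widehat{\mathbf L}_m\le 1$.
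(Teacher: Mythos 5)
Your first direction ($1$-strong partially greedy $\Rightarrow$ $1$-PSLC via Proposition~\ref{propositionnewdef}) is exactly the paper's argument. The converse, however, has a genuine gap, and it sits precisely where you flag it. After correctly transcribing the decomposition and the Lemma~\ref{pl} step from the proof of Theorem~\ref{one2y3}, you are left needing $\Vert T_t(x-P_k(x))\Vert\le \Vert x-P_k(x)\Vert$, and your plan for the single-coordinate reduction $\Vert z'\Vert\le\Vert z\Vert$ does not close. The two-swap ``anchor'' scheme runs into the structure of the $1$-PSLC property itself: by Proposition~\ref{lemmashorter1PSLC}, $1$-PSLC only lets you (I) add a coefficient of dominant magnitude without decreasing the norm, or (II) move a coefficient leftward while keeping its magnitude \emph{equal}. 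The first swap (replacing $c\,\be_{n^*}$ by an equal-magnitude anchor at $n_0\le k$) is fine, but the return swap must simultaneously move the mass back to $n^*$ (rightward, which is the wrong direction for (II)) and shrink its magnitude from $\vert c\vert$ to $t$ (which neither (I) nor (II) permits). Proving that truncation is a contraction is equivalent to proving $g_m^c=1$, which — as the paper points out after Theorem~\ref{two} — is a nontrivial \emph{consequence} of the theorem, not an ingredient you can assume or easily extract coordinate by coordinate.

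The paper sidesteps this entirely with a short bootstrap that you already have all the pieces for. Theorem~\ref{one2y3} gives the two-sided bound $g_m^c\le\widehat{\mathbf L}_m\le g_{m-1}^c\,\omega_m$ together with $\widehat{\mathbf L}_1=\omega_1$. If $\omega_m=1$ for all $m$, then $\widehat{\mathbf L}_1=1$, hence $g_1^c\le\widehat{\mathbf L}_1=1$; and inductively, once $g_{m-1}^c=1$ one gets $g_m^c\le\widehat{\mathbf L}_m\le g_{m-1}^c\cdot 1=1$. The index shift from $m$ to $m-1$ in the upper bound is exactly what makes the induction turn, and it delivers both $\widehat{\mathbf L}_m=1$ and $g_m^c=1$ for all $m$ with no need to analyze the truncation operator directly. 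I suggest you replace your ``technical core'' by this induction on the sandwich inequality.
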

\begin{proof}
If $\widehat{\mathbf{L}}_m=1$ for all $m\in\N$, by Proposition~\ref{propositionnewdef} it follows that $\omega_m=1$ for all $m\in\N$. Reciprocally, if $\omega_m=1$ for all $m\in\N$, from the fact that $g_m^{c}\ge 1$ and Theorem~\ref{one2y3} we obtain
$$
\widehat{\mathbf{L}}_1= 1,\qquad 1\le g_m^{c}\le \widehat{\mathbf{L}}_m\le g_{m-1}^{c}\;\quad \text{for all\ } m\ge 2.   
$$
Thus, inductively it follows that $g_m^{c}=\widehat{\mathbf{L}}_m=1$ for all $m\in\N$. 
\end{proof}

We also have the following characterization of $1$-PSLC bases. 
\begin{proposition}\label{lemmashorter1PSLC} Let $\mathcal B=(\be_n)_{n}$ be an M-basis for a Banach space $\X$. The following are equivalent: 
\begin{enumerate}[\upshape (i)]
\item For any finite sets $A$ and $B$ such that $\vert A\vert\leq\vert B\vert$, $x\in\X$,  $A< \supp(x)\cupdot B$, $\bfe\in\Psi_A,\bfe'\in\Psi_B$, $t\in \mathbb{F}$ with $|t| \geq\max_{n}\vert\be_n^*(x)\vert$, 
\begin{equation}
\Vert x+t\bff_{\bfe A}\Vert\le \Vert x+t\bff_{\bfe' B}\Vert.\label{omega_m=1}
\end{equation}
\item $\mathcal B$ has the $1$-PSLC property.
\item \label{shorter} The following conditions hold: 
\begin{enumerate}[\upshape (I)]
\item \label{shortercondition} For all $x\in \X$, $t\in \mathbb{F}$ such that $|t|\ge \max_{n}|\be_n^*(x)|$, and for all $k\not \in \supp{(x)}$,  
$$
\|x\|\le \|x+t\be_k\|.
$$
\item\label{shorterconditionb}  For all $x\in \X$, $s, t\in \mathbb{F}$ such that $|t|=|s|\ge \max_{n}|\be_n^*(x)|$, and for all $j<\{k\}\cupdot \supp{(x)}$, 
$$
\|x+s \be_j\|\le \|x+t\be_k\|.
$$
\end{enumerate}
\end{enumerate}
\end{proposition}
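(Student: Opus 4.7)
The plan is to establish two equivalences, (i) $\Leftrightarrow$ (ii) and (i) $\Leftrightarrow$ (iii). The first is immediate from Remark~\ref{remarkoriginalpslc}, which characterizes $C_{pl}$-PSLC bases as those for which $\Vert x+t\bff_{\bfe A}\Vert \le C_{pl}\Vert x+t\bff_{\bfe' B}\Vert$ holds for all admissible data, with $C_{pl}$ the minimal such constant; specializing to $C_{pl}=1$ gives exactly \eqref{omega_m=1}.

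For (i) $\Rightarrow$ (iii), I would specialize (i) twice. Condition (I) follows by taking $A=\emptyset$, $B=\{k\}$, and sign $1$ on $B$. Condition (II) is obtained with $A=\{j\}$, $B=\{k\}$, signs $\varepsilon_j = s/|s|$ and $\varepsilon'_k = t/|t|$, and common scalar $|t|=|s|$, so that $|t|\bff_{\bfe A}=s\be_j$ and $|t|\bff_{\bfe' B}=t\be_k$. (The degenerate case $s=t=0$ forces $x=0$ and the inequality is trivial.)

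The substantive direction is (iii) $\Rightarrow$ (i). Write $A=\{a_1<\cdots<a_r\}$ and $B=\{b_1<\cdots<b_s\}$ with $r\le s$; the hypothesis $A<\supp(x)\cupdot B$ then yields $a_i<b_j$ for all $i,j$ and the pairwise disjointness of $A$, $B$, and $\supp(x)$. The idea is to build a chain
\begin{equation*}
y_0=x+t\bff_{\bfe A},\; y_1,\ldots, y_r,\; y_{r+1},\ldots, y_s=x+t\bff_{\bfe' B},
\end{equation*}
in which, for $1\le i\le r$, $y_i$ is obtained from $y_{i-1}$ by replacing the single summand $t\varepsilon_{a_i}\be_{a_i}$ with $t\varepsilon'_{b_i}\be_{b_i}$, and for $r<k\le s$, $y_k:=y_{k-1}+t\varepsilon'_{b_k}\be_{b_k}$. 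Setting $z:=y_{i-1}-t\varepsilon_{a_i}\be_{a_i}=y_i-t\varepsilon'_{b_i}\be_{b_i}$, condition (II) applied to $z$ (with common modulus $|t|$) gives $\Vert y_{i-1}\Vert\le\Vert y_i\Vert$ in the first phase, while (I) applied to $y_{k-1}$ with index $b_k$ and parameter $t\varepsilon'_{b_k}$ gives $\Vert y_{k-1}\Vert\le\Vert y_k\Vert$ in the second. Concatenating the inequalities yields $\Vert y_0\Vert\le\Vert y_s\Vert$, which is (i).

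The main technical work will be to verify at each step that the hypotheses of (I) and (II) hold: the ordering and disjointness conditions and the coefficient bound. These follow routinely from $A<\supp(x)\cupdot B$ and $|t|\ge\max_n|\be_n^*(x)|$, together with the observation that every intermediate element $z$ or $y_{k-1}$ has nonzero coordinates of modulus either $|t|$ (at indices in $A$ or $B$) or $|\be_n^*(x)|\le|t|$ (at indices in $\supp(x)$), so the magnitude hypothesis is preserved throughout the induction.
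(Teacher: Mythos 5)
Your proposal is correct and follows essentially the same route as the paper: (i) $\Leftrightarrow$ (ii) via Remark~\ref{remarkoriginalpslc}, (i) $\Rightarrow$ (iii) by specialization, and (iii) $\Rightarrow$ (i) by iterating (I) and (II) one coordinate at a time while checking that the ordering, disjointness and magnitude hypotheses persist. The paper merely organizes that last implication as an induction on $\vert B\vert$ (swapping $\max A$ for a chosen element of $B$ at each stage) instead of your explicit telescoping chain, but the mechanism is identical.
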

\begin{proof}
The equivalence between (i) and (ii) is Remark~\ref{remarkoriginalpslc} when $C_{pl}=1$, whereas the implication from (i) to (iii) is immediate. \\
Suppose now that (iii) holds and let us see that (i) is satisfied. For $A$, $B$, $x$, $t$, $\bfe$ and $\bfe'$ as in (i), we prove by induction on $|B|$ that \eqref{omega_m=1} holds. 
For $|B|=0$, there is nothing to prove, and for $|B|=1$, \eqref{omega_m=1} follows at once from (I) when taking $A=\emptyset$, and  from (II) when taking $|A|=1$. Suppose now that $\eqref{omega_m=1}$ holds for $|B|\le n$. For $|B|=n+1$, take $k$ any element of $B$ and $B_0:=B\setminus \{k\}$.  In the case $A=\emptyset$ applying the inductive hypothesis and then (I) we get
$$
\Vert x \Vert\le \Vert x +t\bff_{\bfe' B_0}\Vert\le \Vert x+t\bff_{\bfe' B_0}+t\bfe' \be_k\Vert= \Vert x+t\bff_{\bfe' B}\Vert. 
$$
In the case $A\not=\emptyset$, define 
$$
j:=\max A, \qquad A_0:=A\setminus \{j\}, \qquad y:=x+t \bfe \be_j, \qquad z:=x+t  \textbf{1}_{\bfe' B_0}.
$$
Since $A_0< \supp(y)\cupdot B_0$, $|A_0|\le |B_0|=n$, and $j<\{k\}\cupdot \supp{(z)}$, applying the inductive hypothesis first and then (II) it follows that 
$$
\Vert x+t\bff_{\bfe A}\Vert=\|y+t \bff_{\bfe A_0}\| \le \Vert y+t\bff_{\bfe' B_0}\Vert= \Vert z+t\bfe \be_j \Vert\le \Vert z+t\bfe' \be_k \Vert=\Vert x+t\bff_{\bfe' B}\Vert.
$$
This completes the induction step, and thus the proof. 
\end{proof}

In \cite{AA1}, the authors ask a difficult - and still open - question
about the relation between $1$-almost greediness and greediness; the central issue is whether $1$-almost greediness implies unconditionality, and thus greediness. A similar question can be asked for strong partial greediness and almost greediness, as follows: \\
\begin{quote}
	If $\mathcal B$ is $1$-strong partially greedy, is $\mathcal B$ $C$-almost greedy for some $C>0$?
\end{quote}

As almost greediness is equivalent to quasi-greediness and democracy, in order to give a negative answer it suffices to show that $1$-strong partial greediness does not imply democracy.
To that end, we use an example from the family given in \cite[Proposition 6.10]{BDKOW}.

\begin{example}
Let $\mathbb X$ be  the completion of $c_{00}$ under the norm
$$
\Vert (a_n)_n\Vert:= \sup_{A\in \mathcal S}\sum_{n\in A}\vert a_n\vert,
$$
where $\mathcal S \subset \mathbb N$ is the set
$$
\mathcal S:=\lbrace A\subset\N\colon \vert A\vert \leq \sqrt{\min A}\rbrace.
$$
The canonical basis $(\be_n)_n$ of $\X$ is 1-PSLC and it is not democratic. 
\end{example}

\begin{proof} 
First, notice that  $(\be_n)_n$ is a normalized 1-unconditional Schauder basis. Thus, Condition~\eqref{shorter}(I) of Lemma~\ref{lemmashorter1PSLC} holds. Now choose $x\in \X$, $s, t\in \mathbb{K}$ so that $|t|=|s|\ge \max_{n}|\be_n^*(x)|$, and $j, k\in \N$ so that $j<\{k\}\cupdot \supp{(x)}$. For $A\in \mathcal{S}$ with $j\not\in A$, we have
$$
\sum_{n\in A}\vert \be_n^*(x+s\be_j)\vert=\sum_{n\in A}\vert \be_n^*(x)\vert\le \sum_{n\in A}\vert \be_n^*(x+t\be_k)\vert\le \|x+t\be_k\|. 
$$
On the other hand, given $A\in \mathcal{S}$ with $j\in A$, let $B:=\{k\}\cup (A\setminus \{j\})$. Since $|B|\le |A|$ and $\min(B)\ge \min (A)$, it follows that $B\in \mathcal{S}$. Hence, 
$$
\sum_{n\in A}\vert \be_n^*(x+s\be_j)\vert=|s|+\sum_{n\in A\setminus \{j\}}\vert \be_n^*(x)\vert=\sum_{n\in B}\vert \be_n^*(x+t\be_k)\vert\le \|x+t\be_k\|.
$$
Taking supremum over all $A\in \mathcal{S}$, we obtain $\|x+s\be_j\| \le \|x+t\be_k\|$. Hence, Condition~\eqref{shorter}(II) of Lemma~\ref{lemmashorter1PSLC} holds as well, and threfore $(\be_n)_n$ has the $1$-PSLC. \\
The fact that the basis is not democratic was proven in \cite{BDKOW}. Here, we include a proof for the sake of completeness. Let $A:=\lbrace m^2+1,\dots,m^2+m\rbrace$ and $B:=\lbrace 1,\dots,m\rbrace$. Then, since $A\in \mathcal S$, $\Vert \mathbf{1}_A\Vert = m$. We claim that $\Vert \mathbf{1}_B\Vert \leq \sqrt{m}$, hence the basis is not democratic. Indeed,  to prove this upper estimate, take a set $A_1 \in \mathcal S$ such that $\Vert \mathbf{1}_B\Vert = \vert A_1\vert$. Then, $\min A_1\leq m$, so $\vert A_1\vert \leq \sqrt{m}$. Thus, $\Vert \mathbf{1}_B\Vert \leq \sqrt{m}$.
\end{proof}

\section{Discussions on the relation between partially greedy, strong partially greedy and quasi-greedy bases.} \label{sectionextensions}
As it has been mentioned in the introduction, in \cite{DKKT} the authors introduced the notion of partially greedy Schauder bases, and characterized them as those which are quasi-greedy and conservative \cite[Theorem 3.4]{DKKT}. Recall that for Schauder bases, being partially greedy and strong partially greedy are equivalent notions. Thus, the following theorem extends \cite[Theorem 3.4]{DKKT} to the context of Markushevich bases, and also shows a relationship between partially and strong partially greedy Markushevich bases.

\begin{theorem}\label{theoremqgpgspg} Let $\mathcal{B}$ be an M-basis in a Banach space $\X$. The following are equivalent.

\begin{enumerate}[\upshape (i)]
\item $\mathcal{B}$ is strong partially greedy. 
\item $\mathcal{B}$ is quasi-greedy and partially greedy. 
 \item  $\mathcal{B}$ is quasi-greedy and superconservative. 
\item  $\mathcal{B}$ is quasi-greedy and conservative.
\end{enumerate}
\end{theorem}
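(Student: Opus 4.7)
The plan is to close the cycle $(i) \Rightarrow (ii) \Rightarrow (iv) \Rightarrow (iii) \Rightarrow (i)$, and three of these implications follow directly from tools already developed. For $(i) \Rightarrow (ii)$: taking $k=m$ in the definition of $\widehat{\sigma}_m$ gives $\widehat{\sigma}_m(x)\le \|x-P_m(x)\|$, so $\mathbf L_m^{re}\le \widehat{\mathbf L}_m$, yielding partial greediness, while Theorem~\ref{one2y3} gives $g_m^c\le \widehat{\mathbf L}_m$, so $\mathcal B$ is quasi-greedy. For $(iv) \Rightarrow (iii)$: Remark~\ref{inequalities}(iii) yields $\gamma_m\le g_m^c\le C_q$, and Proposition~\ref{mejora} ($\mathbf{sc}_m\le 4\nu^2\gamma_m\mathbf c_m$) combined with the conservative bound $\mathbf c_m\le C_c$ forces $\sup_m \mathbf{sc}_m<\infty$. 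Finally, $(iii) \Rightarrow (i)$ is a direct application of Corollary~\ref{cor1}, giving $\widehat{\mathbf L}_m\le C_q(1+2\mathbf{sc}_m)$, uniformly bounded under the hypothesis.

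The substantive step is $(ii) \Rightarrow (iv)$. Fix finite sets $A<B$ with $1\le |A|\le |B|$ (the case $A=\emptyset$ is trivial), set $m:=\max A$, and choose $C\subset \{1,\dots,m\}\setminus A$ with $|C|=m-|A|$, so that $A\cupdot C=\{1,\dots,m\}$ (with $C$ possibly empty). Consider
\begin{equation*}
y:=\bff_A+2\bff_B+2\bff_C.
\end{equation*}
Since $B>m$, one has $P_m(y)=\bff_A+2\bff_C$, hence $y-P_m(y)=2\bff_B$. Next, pick any $B_0\subset B$ with $|B_0|=|A|$ and select $G_m\in\mathcal{G}_m$ such that $G_m(y)=2\bff_C+2\bff_{B_0}$; this is legitimate because the coefficients of $y$ on $C\cupdot B_0$ all equal $2$, while those on $\supp(y)\setminus(C\cupdot B_0)=A\cupdot(B\setminus B_0)$ have absolute value at most $2$, so $C\cupdot B_0$ is an $m$-greedy set for $y$. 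Then $y-G_m(y)=\bff_A+2\bff_{B\setminus B_0}$, and partial greediness yields
\begin{equation*}
\|\bff_A+2\bff_{B\setminus B_0}\|\le 2 C_p\|\bff_B\|.
\end{equation*}
Applying Remark~\ref{inequalities}(iii) once more to get $\|\bff_{B\setminus B_0}\|\le C_q\|\bff_B\|$, the triangle inequality gives $\|\bff_A\|\le 2(C_p+C_q)\|\bff_B\|$, proving that $\mathcal B$ is conservative with $C_c\le 2(C_p+C_q)$.

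The main obstacle is the construction in $(ii) \Rightarrow (iv)$: the analogue for Schauder bases in \cite{DKKT} relies on uniform boundedness of partial sums, which is unavailable for M-bases. The role of the padding set $C$ is precisely to compensate. It allows us to work at order $m=\max A$, so that $P_m(y)$ cleanly leaves $2\bff_B$ as the residual, while simultaneously forcing any order-$m$ greedy operator for $y$ to leave $\bff_A$ essentially intact, the only residual from $A\cup B$ being the controllable tail $2\bff_{B\setminus B_0}$. Quasi-greediness enters crucially in the final step, as it is needed to absorb that tail back into $\|\bff_B\|$; partial greediness alone would only deliver a bound relating $\|\bff_A\|$ to $\|\bff_A+2\bff_{B\setminus B_0}\|$.
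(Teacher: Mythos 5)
Your proof is correct, and the overall skeleton (reduce everything to the quantitative estimates $g_m^c\le \widehat{\mathbf L}_m$, $\mathbf L_m^{re}\le\widehat{\mathbf L}_m$, Proposition~\ref{mejora}, Remark~\ref{inequalities} and Corollary~\ref{cor1}) matches the paper's. The one substantive step is arranged differently: the paper closes the cycle via (ii)$\Rightarrow$(iii), building $y:=\bff_{\bfe A}+(1+\eta)(\bff_{\bfe' B_0}+\bff_{D})$ so that the greedy set is forced by strictly larger coefficients and $y-G_{m_1}(y)$ is exactly $\bff_{\bfe A}$, and then introducing a second vector $z$ on which quasi-greediness recovers $\|\bff_{\bfe' B}\|$ from $\|\bff_{\bfe' B_0}\|$; letting $\eta\to 0$ gives the clean constant $C_{sc}\le C_pC_q$ and yields superconservativity (with signs) directly. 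You instead prove (ii)$\Rightarrow$(iv), placing all of $B$ into $y$ with coefficient $2$, exploiting that ties are permitted in the definition of a greedy set to select $G_m(y)=2\bff_C+2\bff_{B_0}$, and then absorbing the residual $2\bff_{B\setminus B_0}$ via $\gamma_m\le g_m^c\le C_q$ and the triangle inequality. Both are valid; your route avoids the auxiliary vector $z$ and the $\eta$-perturbation at the cost of a slightly worse constant ($2(C_p+C_q)$ versus $C_pC_q$) and of only establishing the unsigned (conservative) property in that step, which you then upgrade through Proposition~\ref{mejora} — exactly the upgrade the paper also needs, just placed at a different point of the cycle. The only point worth double-checking in your argument is the legitimacy of the chosen greedy operator, and it is fine: $C\cupdot B_0$ satisfies $\min_{n\in C\cupdot B_0}|\be_n^*(y)|=2\ge \max_{n\notin C\cupdot B_0}|\be_n^*(y)|$, and the partial greediness inequality is required to hold for every greedy operator, so you may select this one.
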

\begin{proof} 
The implications (i) $\Longrightarrow$ (ii) and (iii) $\Longrightarrow$ (iv) are immediate.  To prove that (ii) $\Longrightarrow$ (iii), fix $m\in \N$ and let $A, B, \bfe, \bfe'$ as in Definition~\ref{definitionsuperconservative}. Choose $B_0\subset B$ with $|B_0|=|A|$, and let

$$
m_1:=\max{A}, \qquad m_2:=|B\setminus B_0|, \qquad D:=\{1,\dots,m_1\}\setminus A.
$$
Choose $\eta>0$ and define 
$$
y:=\bff_{\bfe A}+(1+\eta)(\bff_{\bfe' B_0}+\bff_{D}),\qquad  z:=(1+\eta)\bff_{\bfe' B_0}+(1+2\eta)\bff_{\bfe' B\setminus B_0}.
$$
We have
\begin{eqnarray*}
\|\bff_{\bfe A}\|&=&\|y-G_{m_1}(y)\|\le C_p\|y-P_{m_1}(y)\|\\
& = & C_p\|(1+\eta)\bff_{\bfe' B_0}\|=C_p\|z-G_{m_2}(z)\|\\
&\le&C_pC_q\|z\|.
\end{eqnarray*}
Since $\eta$ is arbitrary, it follows that $\|\bff_{\bfe A}\| \le C_pC_q \|\bff_{\bfe' B}\|$ and $\mathbf{sc}_m\le C_pC_q$. Then,  taking supremum over $m$ we obtain $C_{sc}\le C_pC_q$, so the basis is superconservative.\\
To see that (iv) $\Longrightarrow$ (i), note that $\gamma_m\le g_m^c \le C_q$ for all $m\in \N$, so the result follows by Theorem~\ref{one}, Remark~\ref{inequalities} and Proposition~\ref{mejora}. \end{proof}

We do not know whether strong partially and partially greedy Markushevich bases are equivalent notions. In light of Theorem~\ref{theoremqgpgspg}, the question is to find out whether or when every partially greedy Markushevich basis is quasi-greedy. In the case of $1$-partially greedy bases we give a positive answer.

\begin{proposition}\label{proposition1pg} Let $\mathcal B=(\be_n)_{n=1}^\infty$ be $1$-partially greedy Markushevich basis for $\X$ and let $c:=\sup_n\lbrace\Vert \be_n\Vert, \Vert \be_n^*\Vert\rbrace$.
The following hold:
	\begin{enumerate}[\upshape (i)]
		\item  For all $x\in \X$, $m\in \N$, any greedy sum  and $1\le k\le m$, 
		\begin{eqnarray}\label{newpartially-0a} 
		\|x-G_m(x)\|\le \|x-P_k(x)\|.\label{newpartially-0}
		\end{eqnarray}
		\item 
		$\mathcal B$ is $C_{sp}$-strong partially greedy with $C_{sp}\le 1+c^2$.
		\item 
		The sequence $(\be_n)_{n\ge 2}$ is a $1$-strong partially greedy basis for $\Y:=\overline{[\be_n:n\ge 2]}$. 
		\item $\mathcal B$ is $1$-superconservative.
	\end{enumerate}
\end{proposition}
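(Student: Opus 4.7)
The plan is to prove the main statement (i), from which (ii), (iii), and (iv) follow via short auxiliary-element constructions. I describe (iv), (ii), (iii) first and then outline (i), which is the main obstacle.

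\textbf{Deducing (iv) from (i).} For $A<B$ with $|A|\le|B|\le m$, $A\le m$, and signs $\bfe,\bfe'$, set $D:=\{1,\ldots,\max A\}\setminus A$; for $\eta>0$ and $M$ larger than all other coefficients of interest, consider
$$x_{\eta,M}:=\bff_{\bfe A}+(1+\eta)\bff_{\bfe' B}+M\bff_D.$$
Since $A<B$ gives $B\cap\{1,\ldots,\max A\}=\emptyset$ and $M$ dominates, $D\cup B$ is a greedy set of $x_{\eta,M}$ of order $N:=|B|+|D|$ with $x_{\eta,M}-G_N(x_{\eta,M})=\bff_{\bfe A}$, and $P_{\max A}(x_{\eta,M})=\bff_{\bfe A}+M\bff_D$ gives $x_{\eta,M}-P_{\max A}(x_{\eta,M})=(1+\eta)\bff_{\bfe' B}$. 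Since $\max A\le|B|+|D|=N$ because $|B|\ge|A|$, applying (i) with $m'=N$ and $k'=\max A$ yields $\|\bff_{\bfe A}\|\le(1+\eta)\|\bff_{\bfe' B}\|$, and $\eta\to 0^+$ gives $\mathbf{sc}_m\le 1$.

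\textbf{Deducing (ii) and (iii) from (i).} Applying (i) with $k=1$ and using $\|P_1(x)\|\le c^2\|x\|$ gives $\|x-G_m(x)\|\le\|x-P_1(x)\|\le(1+c^2)\|x\|$; combining with (i) for each $k\in\{1,\ldots,m\}$ yields $\|x-G_m(x)\|\le(1+c^2)\widehat{\sigma}_m(x)$, which proves (ii). For (iii), for $y\in\Y$ one has $\be_1^*(y)=0$, so the restricted partial sums satisfy $P_k^{\Y}(y)=P_{k+1}(y)$ and greedy operators for $(\be_n)_{n\ge 2}$ coincide with those of $\mathcal B$. By (i), $\|y-G_m(y)\|\le\|y-P_k(y)\|$ for $1\le k\le m$, and in particular $\|y-G_m(y)\|\le\|y-P_1(y)\|=\|y\|$ covers $k=0$ in the restricted basis. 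For the case $k=m+1$: with $A$ a greedy set of $y$ of order $m$ and $t:=\min_{n\in A}|\be_n^*(y)|$, set $y':=y+t\be_1$. Then $A\cup\{1\}$ is a greedy set of $y'$ of order $m+1$, and a direct computation gives $y'-G_{m+1}(y')=y-G_m(y)$ and $y'-P_{m+1}(y')=y-P_{m+1}(y)$; applying $1$-partial greediness of $\mathcal B$ to $y'$ gives $\|y-G_m(y)\|\le\|y-P_{m+1}(y)\|$.

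\textbf{Strategy for (i).} Fix $x$, $m$, $1\le k\le m$, and a greedy set $A$ of order $m$. Write $A_1:=A\cap\{1,\ldots,k\}$, $A_2:=A\setminus A_1$, $B_1:=\{1,\ldots,k\}\setminus A$, $m_1:=\max(A\cup\{k\})$, $D:=\{k+1,\ldots,m_1\}\setminus A$, and $\alpha:=\min_{n\in A}|\be_n^*(x)|$. The plan is to apply 1-partial greediness to an auxiliary element $y$ obtained from $x$ by replacing the coefficients on $D$ by $\alpha$-magnitude spikes, so that $A\cup D$ becomes a greedy set of $y$ at order $N:=m+|D|$; the prefix $\{1,\ldots,m_1\}$ then decomposes as $A_1\cup B_1\cup A_2\cup D$, which matches the structure of both sides of (i) up to controllable cancellation. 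The main obstacle will be simultaneously arranging $y-G_N(y)=x-G_m(x)$ and a partial-sum identity relating to $x-P_k(x)$, particularly when $B_1\ne\emptyset$, where a direct construction is insufficient. The resolution is expected to proceed by induction on $\max A$: in the inductive step, choosing $j^*:=\arg\min_{n\in A}|\be_n^*(x)|$, applying the inductive hypothesis to $A\setminus\{j^*\}$, and then using 1-partial greediness at order 1 on $(x-G_m(x))+\be_{j^*}^*(x)\be_{j^*}$ (for which $\{j^*\}$ is a greedy set of order 1), with the base case $A\subset\{1,\ldots,k\}$ forcing $k=m$ and reducing (i) to 1-partial greediness.
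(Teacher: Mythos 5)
Your reductions of (ii), (iii) and (iv) to (i) are correct and essentially coincide with the paper's own constructions (the vector $\bff_{\bfe A}+(1+\eta)\bff_{\bfe' B}+M\bff_D$ for (iv), the perturbation $y+t\be_1$ for (iii), and the $k=1$ estimate for (ii)). The problem is (i), which is the substance of the proposition and which you only sketch; as written, the sketch does not close. First, the induction is declared to be on $\max A$, but removing $j^*:=\arg\min_{n\in A}\vert\be_n^*(x)\vert$ need not decrease $\max A$, and the descent (dropping one element of $A$ at a time) never reaches the stated base case $A\subset\{1,\dots,k\}$. Second, and more seriously, the two inequalities your inductive step produces do not chain. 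Writing $A':=A\setminus\{j^*\}$ and $z:=(x-G_m(x))+\be_{j^*}^*(x)\be_{j^*}=x-P_{A'}(x)$, order-$1$ partial greediness applied to $z$ gives
$$
\Vert x-G_m(x)\Vert\le\Vert z-P_1(z)\Vert=\Vert x-P_{A'}(x)-\be_1^*(x)\be_1\Vert
$$
whenever $1\notin A'$, while the inductive hypothesis applied to $x$ and $A'$ gives $\Vert x-P_{A'}(x)\Vert\le\Vert x-P_k(x)\Vert$. Without unconditionality there is no way to compare $\Vert x-P_{A'}(x)-\be_1^*(x)\be_1\Vert$ with $\Vert x-P_{A'}(x)\Vert$, so the argument stalls exactly in the situation you flag as the obstacle.

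The repair is a case split, which is how the paper argues (inducting on $m$). If some $j\in A$ satisfies $j\le k$, peel off that $j$ rather than the argmin: $A\setminus\{j\}$ is an $(m-1)$-greedy set for $x-\be_j^*(x)\be_j$, the inductive hypothesis applies to that vector, and $x-\be_j^*(x)\be_j-P_{k}(x-\be_j^*(x)\be_j)=x-P_{k}(x)$ because $j\le k$. If instead $k<A$, peel off the top greedy element $G_1(x)$: the inductive hypothesis yields $\Vert x-G_m(x)\Vert\le\Vert x-G_1(x)-P_{k}(x-G_1(x))\Vert=\Vert(x-P_{k}(x))-G_1(x-P_{k}(x))\Vert$, and order-$1$ partial greediness applied to $x-P_{k}(x)$ --- for which $P_1$ vanishes since $k\ge1$ --- finishes. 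Your one-case scheme can also be salvaged by applying the inductive hypothesis to $x-\be_1^*(x)\be_1$ instead of to $x$ after the order-$1$ step, but some such adjustment is indispensable; as stated, (i) is not proved.
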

\begin{proof}
We prove \eqref{newpartially-0a} by induction on $m$. For $m=1$, that is just the $1$-partially greedy condition. Suppose \eqref{newpartially-0} holds for $1\le m\le m_0$ and let us prove that it holds for $m_0+1$. Take $x\in \X$ and fix $A$ an $(m_0+1)$-greedy set for $x$ with greedy operator $G_{m_0+1}$. For $k=m_0+1$, \eqref{newpartially-0} holds because $\mathcal{B}$ is $1$-partially greedy. Fix $1\le k_0\le m_0$, and consider first the case that there is $1\le j\le k_0$ such that $j\in A$.
Then
$$
G_{m_0+1}(x)=\be_{j}^*(x)\be_j+G_{m_0}(x-\be_{j}^*(x)\be_j).
$$
Thus, by  inductive  hypothesis,
\begin{eqnarray*}
\|x-G_{m_0+1}(x)\|&=&\|x-\be_{j}^*(x)\be_j-G_{m_0}(x-\be_{j}^*(x)\be_j)\|\\
&\le& \|x-\be_{j}^*(x)\be_j-P_{k_0}(x-\be_{j}^*(x)\be_j)\|=\|x-P_{k_0}(x)\|.
\end{eqnarray*}
On the other hand, if $k_0<A$, then for all $1\le j\le k_0$ and all $1\le l\le m_0+1$, 
	$$
	G_{l}(x)=G_{l}(x-P_j(x));\qquad P_j(x-G_l(x))=P_j(x).
	$$
	Hence, by inductive  hypothesis and the $1$-partially greedy condition, 
	\begin{eqnarray*}
	\|x-G_{m_0+1}(x)\|&=&\|x-G_1(x)-G_{m_0}(x-G_1(x))\|\le \|x-G_1(x)-P_{k_0}(x-G_1(x))\|\\
	&=&\|x-P_{k_0}(x)-G_1(x-P_{k_0}(x))\|\le \|x-P_{k_0}(x)-P_1(x-P_{k_0}(x))\|\\
	&=&\|x-P_{k_0}(x)\|.
	\end{eqnarray*}
	This completes the inductive step, and thus the proof of \eqref{newpartially-0a}.\\
	To prove  (ii), we use  \eqref{newpartially-0a} for $k=1$. For each $x\in \X$ and $m\in \mathbb{N}$, we have
	$$
	\|x-G_m(x)\|\le \|x-P_1(x)\|\le \|x\|+\|P_1(x)\|\le (1+c^2)\|x\|. 
	$$
	Therefore, $\mathcal B$ is strong partially greedy with constant $C_{sp}\le 1+c^2$. \\
	Now, we show (iii). It is clear that $(\be_n)_{n\geq 2}$ is a basis for $\Y$. Denote by $(\overline{P}_m)_m$ and $(\overline{G}_m)_m$ the projections and greedy sums with respect to $(\be_n)_{n\geq 2}$. Given $y\in \Y$, $m\in \N$, and $0\le k\le m$, choose $a>|\be_{{j}}^*(y)|$ for all $j$.  By (i), 
$$
\|y-\overline{G}_m(y)\|=\|a \be_1+y-G_{m+1}(a \be_1+y)\|\le \|a \be_1+y-P_{k+1}(a \be_1+y)\|=\|y-\overline{P}_k(y)\|.
$$
To prove (iv), fix $A\not=\emptyset, B, \bfe, \bfe'$ as in Definition~\ref{definitionsuperconservative}, choose $\eta>0$ and define 
$$
k:=\max{A}, \quad D:=\{1,\dots,k\}\setminus A,\quad y:=\bff_{\bfe A}+(1+\eta)(\bff_{\bfe' B}+\bff_{D}),\quad  m:=|D\cup B|. 
$$
Given that $m\ge k\ge 1$, it follows by (i) that
$$
\|\bff_{\bfe A}\|=\|y-G_m(y)\|\le \|y-P_k(y)\|=(1+\eta)\|\bff_{\bfe' B}\|.
$$
Letting $\eta\rightarrow 0$ we obtain the desired inequality $
\|\bff_{\bfe A}\|\le \|\bff_{\bfe' B}\|$, which shows that $\mathcal B$ is 1-superconservative. Now the proof is complete.
\end{proof}

\end{document}